\newtheorem {theorem}{Theorem}[section]         \newtheorem {lemma}[theorem]{Lemma}     \newtheorem {definition}[theorem]{Definition}
     \newtheorem {remark}[theorem]{Remark}   
       \newtheorem {proposition}[theorem]{Proposition} 
\newcommand{\C}{\mathbb C}
\newcommand{\R}{\mathbb R}
\newcommand{\Z}{\mathbb Z} 	
\newcommand{\N}{\mathbb N}
\newcommand{\B}{\mathbb{B}}
\newcommand{\Bn}{\mathbb{B}^n}
\newcommand{\Br}{\mathbb{B}_R^2}
\newcommand{\Bnr}{\mathbb{B}^n_R}
\newcommand{\norm}[1]{\left\Vert#1\right\Vert}
\newcommand{\scal}[1]{\left<#1\right>}
\newcommand{\meslambda}{d\lambda}
\newcommand{\mes}{d\mu_\alpha}
\newcommand{\muar}{d\mu_{\alpha,R}}
\newcommand{\HolB}{\mathcal{H}ol(\Bn)}
\newcommand{\HolC}{\mathcal{H}ol(\C^n)}
\newcommand{\ADmn}{\mathcal{A}^{2,\alpha}_{m}(\B^n)}
\newcommand{\ADmnr}{\mathcal{A}^{2,\alpha}_{m}(\B^n_R)}
\newcommand{\ADm}{\mathcal{A}^{2,\alpha}_{m}(\B^2)}
\newcommand{\ADmr}{\mathcal{A}^{2,\alpha}_{R,m}(\Br)}
\newcommand{\BDmn}{\mathcal{F}^{2,\nu}_{m}(\C^n)}
\newcommand{\BDm}{\mathcal{F}^{2,\nu}_{m}(\C^2)}
\begin{document}

\begin{frontmatter}

%% Title, authors and addresses

%% use the tnoteref command within \title for footnotes;
%% use the tnotetext command for theassociated footnote;
%% use the fnref command within \author or \address for footnotes;
%% use the fntext command for theassociated footnote;
%% use the corref command within \author for corresponding author footnotes;
%% use the cortext command for theassociated footnote;
%% use the ead command for the email address,
%% and the form \ead[url] for the home page:
%% \title{Title\tnoteref{label1}}
%% \tnotetext[label1]{}
%% \author{Name\corref{cor1}\fnref{label2}}
%% \ead{email address}
%% \ead[url]{home page}
%% \fntext[label2]{}
%% \cortext[cor1]{}
%% \address{Address\fnref{label3}}
%% \fntext[label3]{}

\title{Weighted Bergman-Dirichlet and Bargmann-Dirichlet spaces of order $m$ in high dimensions}
\author{A. El Fardi}    \ead{aelfardi@gmail.com}
\author{A. Ghanmi}      \ead{ag@fsr.ac.ma}
\author{A. Intissar}    \ead{intissar@fsr.ac.ma}
\author{M. Ziyat}       \ead{ziatemohammed@gmail.com}
\address{E.D.P. and Spectral Geometry,
          Laboratory of Analysis and Applications-URAC/03,
          Department of Mathematics, P.O. Box 1014,  Faculty of Sciences,
          Mohammed V University, Rabat, Morocco}

\begin{abstract}
We introduce and study a generalization of the classical weighted Bergman and Dirichlet spaces on the unit ball in high dimension, the Bergman-Dirichlet spaces. Their counterparts on the whole $n$-complex space $\C^n$, the Bargmann-Dirichlet spaces, are also introduced and studied. Mainly, we give a complete description of the considered spaces, including orthonormal basis and the explicit formulas for their reproducing kernel functions. Moreover, we investigate their asymptotic  behavior when the curvature goes to $0$.
\end{abstract}

\begin{keyword}
Weighted Bergman-Dirichlet spaces \sep  Weighted Bargmann-Dirichlet spaces \sep Reproducing kernel function \sep Hypergeometric function
%% PACS codes here, in the form: \PACS code \sep code

%% MSC codes here, in the form: \MSC code \sep code
%% or \MSC[2008] code \sep code (2000 is the default)

\end{keyword}

\end{frontmatter}

%% \linenumbers

%% main text
%\end{document}

\section{Introduction and statement of main results}

The Segal-Bargmann space, on the $n$-complex space $\C^n$ endowed with its standard inner product $\scal{z,w}$, and the so-called weighted Bergman and Dirichlet spaces, on the open unit ball $\Bn=\{ z\in \C^n; \, |z|:= \sqrt{\scal{z,z}}< 1\}$, are basic examples of functional spaces in the theory of analytic functions.
 Such spaces play important roles in function theory and operator theory, as well as in modern analysis, probability and statistical analysis.
For a nice introduction and surveys of these spaces in the context of function and operator theories, see for example
\cite{Krantz19922001,RochbergWu1993,Wu1998,Zhu2005,ArcozziRochbergSawyerWick2011,Zhu2012,ElFallahKellayMashreghiRansford2014} and the references therein.

Recently, two new classes of analytic function spaces of Sobolev type,
 labeled by a nonnegative integer $m$, have been introduced and studied in \cite{EGIMS15}.
 The first one is the Bergman-Dirichlet space generalizing the weighted Bergman and
 Dirichlet spaces on the disk $D(0,R)$ in the complex plane $\C$. The second is the Bargmann-Dirichlet space generalizing the Segal-Bargmann space
 on the complex plane $\C= D(0,+\infty)$.
They are reproducing kernel Hilbert spaces. Their reproducing kernel functions have been calculated explicitly and expressed in terms
of the hypergeomtric functions.

%The purpose of this note is to obtain analogous results for functions in the Hardy class Hp on the unit ball B in Cn.

Our purpose in the present paper is to introduce the spaces $\ADmn$, the analogue of the considered Bergman-Dirichlet spaces in high dimension.
What we do in the construction of $\ADmn$ works mutatis mutandis to introduce
their counterparts on the whole $n$-complex space $\C^n$, the Bargmann-Dirichlet spaces $\BDmn$ of order $m$. We investigate their spectral properties and generalize the results obtained in \cite{EGIMS15} to high dimensions $n\geq 1$.
A part of some special techniques introduced in the calculation, the approach used here to prove our main results is quite similar to the one-dimensional setting. The motivations for studying such a generalization are various and meaningful.
%Indeed, the case of $n=1$ .........

The paper is organized as follows. In Section 2, we introduce the weighted Bergman-Dirichlet and Bargmann-Dirichlet spaces and state our main results (Theorems \ref{Mthm1} and \ref{Mthm2}) giving the explicit expression of the corresponding  reproducing kernel functions.
Section 3 is devoted to the concrete description of the Bergman-Direchlet spaces and to the proof of Theorem \ref{Mthm1}.
In Section 4 we are concerned with the Bargmann-Dirichlet spaces and the proof of
Theorem \ref{Mthm2}.
We conclude the paper by studying the asymptotic behavior of the $L^2$-eigenprojector kernel of $\ADmnr$ when $R$ tends to infinity, and show that it gives rise to the $L^2$-eigenprojector kernel of $\BDmn$.

\section{Statement of main results}
%In this section we fix notation, we introduce our  and state our
For given $\alpha > -1$, we consider the measure
$ \mes(z) := \left(1-\left|z\right|^{2}\right)^{\alpha}\meslambda(z) ,$
where $\meslambda$ stands for the Lebesgue measure.
Then, the weighted Bergman space $\mathcal{A}^{2,\alpha}(\Bn)$ (\cite{Krantz19922001,Stroethoff1998,HuZhang2001,Zhu2005})
can be defined as the functional space of all holomorphic functions $f$ on $\Bn$, $f\in \HolB$, that are $\mes$-square integrable,
$ f\in L^{2,\alpha} (\Bn):=L^{2}\left(\Bn; \mes\right)$. That is
\begin{align}\label{wBs}
\mathcal{A}^{2,\alpha}(\Bn) := L^{2,\alpha}(\Bn)  \cap \HolB.
\end{align}
It is supplied with the norm $\norm{\cdot}_{\alpha}:=\norm{\cdot}_{L^{2,\alpha}(\Bn)}$ associated to the inner product defined as
\begin{align}\label{normBergman}
\scal{f,g}_{\alpha} := \int_{\Bn} f(z)\overline{g(z)} (1-|z|^2)^\alpha \meslambda(z) .
\end{align}
The weighted Dirichlet space $\mathcal{D}^{2,\alpha}(\Bn)$  is the analytic function space on $\Bn$ defined by \cite{Stevic2010,Zhu2005}
\begin{align}\label{wDs}
\mathcal{D}^{2,\alpha}(\Bn) := \left\lbrace  f(z)=\sum\limits_{p\in (\Z^+)^n} a_p z^p; \,\sum_{p\in (\Z^+)^n}^{+\infty}  |p| \frac{p!}{|p|!} |a_p|^2 < + \infty \right\rbrace
\end{align}
Here $|p|=p_1+ \cdots + p_n$ and $p!=p_1! \cdots  p_n!$ for given multi-index $p=(p_1, \cdots , p_n) \in (\Z^+)^n$ and $z^p =z_1^{p_1}  \cdots  z_n^{p_n}$ for given $z = (z_1, \cdots , z_n) \in \C^n$.

Now, for every fixed nonnegative integer $m$, every complex valued
holomorphic function $f$ with the Taylor expansion
$f(z)=\sum\limits_{p\in(\Z^+)^n}a_pz^p$
can be written as
\begin{align}\label{split}
f(z)=f_{1,m}(z) + f_{2,m}(z) ,
\end{align}
where $f_{1,m}$ and $f_{2,m}$ stand for
$f_{1,m}(z)=\sum_{|p|<m}a_pz^p $ and $ f_{2,m}(z)=\sum_{|p|\geq m}a_pz^p.$
We define
\begin{align}\label{norm}
\norm{f}_{\alpha,m}^2=\norm{f_{1,m}}_\alpha^2+m! \sum_{|p|=m}\frac{1}{p!}\norm{D^p f_{2,m}}_\alpha^2
\end{align}
to be the norm on $L^{2,\alpha}(\B^n)$ associated to the inner product
\begin{align}\label{innerp}
\scal{f,g}_{\alpha,m}= \scal{f_{1,m},g_{1,m}}_\alpha +  m!\sum_{|p|=m}\frac{1}{p!}\scal{D^p f_{2,m},D^p g_{2,m}}_\alpha.
\end{align}
Here $D^p$ is the partial differential operator of total degree $|p|$, $p=(p_1,\cdots,p_n)\in(\Z^+)^n$, defined by
\begin{align}\label{Dop}
D^p=\frac{\partial^{|p|}}{\partial z_1^{p_1}\cdots \partial z_n^{p_n}}.
\end{align}

\begin{definition}
	The functional space
$ %	\begin{equation*}
	\ADmn =\left\{f\in\mathcal{H}ol(\B^n)\;\;/\;\; \norm{f}_{\alpha,m}<+\infty\right\}
$ %	\end{equation*}
	endowed with the norm \eqref{norm} will be called the generalized Bergman-Dirichlet space of order $m$ on $\B^n$.
\end{definition}

\begin{remark}\label{Rem-1} For the special cases $m=0$ and $m=1$, we recover the weighted Bergman space \eqref{wBs} and the classical Dirichlet space \eqref{wDs}, respectively.
\end{remark}

Our central result for these spaces can be stated as follows
\begin{theorem}\label{Mthm1} Keep notations as above. Then, the space $\ADmn$ is a reproducing kernel Hilbert space.
		Its reproducing kernel function is given in terms of the ${_3F_2}$-hypergeometric function by the following closed form
		\begin{align}
		K_{\ADmn}(z,w) =\frac{\Gamma(\alpha+n+1)}{\pi^n\Gamma(\alpha+1)}
		 & \sum\limits_{k <m}  (\alpha+n+1)_k\frac{\scal{z,w}^k}{k!}
		\label{RK-BDC}
		\\ &  + \frac{\Gamma(\alpha+n+1)}{\pi^n\Gamma(\alpha+1)} \frac{\scal{z,w}^{m}}{(m!)^{2}} \,
		{_3F_2}\left(\begin{array}{c} 1, 1, \alpha+n+1\\ m+1,m+1 \end{array}\bigg |  \scal{z,w} \right) .\nonumber
		\end{align}
\end{theorem}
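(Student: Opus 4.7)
The plan is to exhibit the monomials $\{z^p\}_{p\in(\Z^+)^n}$, suitably normalized, as an orthonormal basis of $\ADmn$, and then apply the standard reproducing-kernel formula
\[
K_{\ADmn}(z,w) = \sum_{p\in(\Z^+)^n} \frac{z^p\,\overline{w^p}}{\norm{z^p}_{\alpha,m}^2}.
\]
The decomposition \eqref{split} makes $\ADmn$ the orthogonal direct sum of the finite-dimensional space of polynomials of degree $<m$ (carrying the weighted Bergman norm) and the high-degree block (carrying the $D^q$-weighted norm from \eqref{innerp}). Distinct monomials are pairwise orthogonal in both blocks, because $D^q z^p = \frac{p!}{(p-q)!}z^{p-q}$ when $q\leq p$ componentwise (and vanishes otherwise), so the orthogonality of monomials in $\mathcal{A}^{2,\alpha}(\Bn)$ propagates through the derivatives.

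Next I would compute $\norm{z^p}_{\alpha,m}^2$ explicitly. For $|p|<m$ this is the classical identity $\norm{z^p}_\alpha^2 = \pi^n\Gamma(\alpha+1)\,p!/\Gamma(n+\alpha+|p|+1)$. For $|p|\geq m$, using the $D^q$-action together with $|p-q|=|p|-m$ gives
\[
\norm{z^p}_{\alpha,m}^2 = \frac{m!\,\pi^n \Gamma(\alpha+1)\,(p!)^2}{\Gamma(n+\alpha+|p|-m+1)}\sum_{\substack{|q|=m\\ q\leq p}}\frac{1}{q!\,(p-q)!},
\]
and the multinomial Vandermonde identity $\sum_{|q|=m,\,q\leq p}\binom{p}{q}=\binom{|p|}{m}$ collapses the $q$-sum to the clean closed form
\[
\norm{z^p}_{\alpha,m}^2 = \frac{\pi^n \Gamma(\alpha+1)\,p!\,|p|!}{(|p|-m)!\,\Gamma(n+\alpha+|p|-m+1)}.
\]
Completeness of $\ADmn$, and hence the fact that the normalized $\{z^p\}$ form a genuine Hilbert basis, follows by a routine argument paralleling the one-dimensional treatment in \cite{EGIMS15}: the Taylor coefficients of $f$ are recovered via $a_p = \scal{f,z^p}_{\alpha,m}/\norm{z^p}_{\alpha,m}^2$, so any $f$ orthogonal to every monomial vanishes, and a standard Cauchy-sequence argument (low- and high-degree parts handled separately, the high-degree part by pulling back Cauchyness of the $D^q$-images) yields completeness.

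The final step is to sum the series. Substituting the two norm formulas into the kernel expansion and invoking $\scal{z,w}^k=\sum_{|p|=k}\frac{k!}{p!}z^p\overline{w^p}$ reduces the double sum over multi-indices to a single sum in $\scal{z,w}$. The low-degree part becomes the polynomial contribution to \eqref{RK-BDC} after writing $\Gamma(n+\alpha+k+1)=(n+\alpha+1)_k\Gamma(n+\alpha+1)$. In the high-degree part, reindexing by $j=|p|-m$ and rewriting $(j+m)!=m!\,(m+1)_j$ and $j!=(1)_j$ produces $\Gamma(n+\alpha+1)\scal{z,w}^m/(\pi^n\Gamma(\alpha+1)(m!)^2)$ times
\[
\sum_{j\geq 0}\frac{(1)_j\,(1)_j\,(\alpha+n+1)_j}{(m+1)_j\,(m+1)_j}\,\frac{\scal{z,w}^j}{j!} = {}_3F_2\!\left(\begin{array}{c}1,1,\alpha+n+1\\ m+1,m+1\end{array}\bigg|\,\scal{z,w}\right),
\]
which is exactly the ${}_3F_2$-term of \eqref{RK-BDC}.

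The main obstacle I expect is the combinatorial bookkeeping for $|p|\geq m$: one must correctly apply the multinomial Vandermonde identity to collapse the $q$-sum, and then repackage the leftover factorials into Pochhammer symbols so that the series matches the canonical ${}_3F_2$ normalization. Locally uniform convergence of the series on the diagonal domain $|\scal{z,w}|<1$ is automatic from standard Pochhammer asymptotics and need only be mentioned in passing.
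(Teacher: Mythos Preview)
Your proposal is correct and follows essentially the same route as the paper: orthogonality of the monomials, explicit norm computation via the identity $\sum_{|q|=m,\,q\leq p}\binom{p}{q}=\binom{|p|}{m}$ (which the paper states in the equivalent ``multi-monomial'' form \eqref{s-nomial}), a Cauchy-sequence completeness argument, and finally summing the kernel series by collapsing $\sum_{|p|=k}\frac{k!}{p!}z^p\overline{w^p}=\scal{z,w}^k$ and repackaging factorials as Pochhammer symbols to recognize the ${}_3F_2$. The only cosmetic difference is that the paper isolates the pointwise evaluation bound (Lemma~\ref{lem:ev}) as a separate step before proving completeness, whereas you fold that into the routine argument.
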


%\begin{remark}
%Notice that the second term in the right hand side of \eqref{norm} has special meaning for the special cases of $m=1$ and $m=2$. Indeed, it can be viewed as the trace of the matrix
%$$\tilde{\nabla}(f)^*(\tilde{\nabla}(f))$$
%for $m=1$, and as the trace of the matrix $H(f)^*.(H(f))$ when $m=2$, where
%$$\displaystyle
%\tilde{\nabla}(f)=\left(\frac{\partial f}{\partial z_1},\frac{\partial f}{\partial z_2}\right)\quad \mbox{and} \quad H(f)=\left(
%\begin{array}{cc}
%\dfrac{\partial^2 f}{\partial z_1^2} &\dfrac{\partial^2 f}{\partial z_1\partial z_2}  \\
%\dfrac{\partial^2 f}{\partial z_1\partial z_2} & \dfrac{\partial^2 f}{\partial z_2^2}
%\end{array}
%\right)
%$$ are the complex gradient and Hessian matrix associated to $f$.
%\end{remark}
\smallskip
On the whole $n$-dimensional euclidean complex space $\C^n$ and for given $\nu>0$, we denote by $L^{2,\nu}(\C^n):=L^2(\C^n; e^{-\nu|z|^2} \meslambda) $ the space of $e^{\nu|z|^2}\meslambda$-square integrable functions on $\C^n$. The Segal-Bargmann Hilbert space $\mathcal{F}^{2,\nu}(\C^n)$  is then defined to be  the space of all holomorphic functions belonging to $L^{2,\nu}(\C^n)$. That is
$ %\begin{align}\label{BFspace}
\mathcal{F}^{2,\nu}(\C^n)=  L^{2,\nu}(\C^n) \cap \HolC,
$ % \end{align}
supplied with the norm
\begin{align}\label{BFnorm}
\norm{f}_{\nu}^2:=\int_{\C^n}|f(z)|^2e^{-\nu|z|^2} \meslambda(z) .
\end{align}
As a generalization of this space, we perform the space of all holomorphic functions on
$\C^n$ such that
\begin{equation}\label{norm2}
||f||_{\nu,m}^2=||f_{1,m}||_\nu^2+\sum_{|l|=m}\frac{m!}{l!}||D^l f(z)||_\nu^2  \ \ \ < +\infty,
\end{equation}
where we have split $f$ as in \eqref{split}, to wit $f = f_{1,m} + f_{2,m} $.
That is

\begin{definition}
	The functional space
$ %	\begin{equation*}
	\BDmn=\left\{f\in\mathcal{H}ol(\C^n)\;\;/\;\;||f||_{\nu,m}<+\infty\right\}
$ %	\end{equation*}
	endowed with the norm \eqref{norm2} will be called the generalized Bargmann-Dirichlet space of order $m$ on $ \C^n $.
\end{definition}

$$\BDmn=\left\{f\in\mathcal{H}ol(\C^n)\;\;/\;\;||f||_{\nu,m}<\infty\right\}.$$
The analogue of Theorem \ref{Mthm1} for $\BDmn$ is the following

\begin{theorem}\label{Mthm2}
The space $\BDmn$ is a reproducing kernel Hilbert space.
   Its reproducing kernel function is given in terms of the ${_2F_2}$-hypergeometric function by
\begin{align}\label{RK-BDC}
K_{\BDmn}(z,w)=\left(\frac{\nu}{\pi}\right)^n
    \left(  \sum\limits_{k<m} \frac{(\nu \scal{z,w})^{k}}{k!}+\frac{\scal{z,w}^m}{(m!)^2} \,
   {_2F_2}\left(\begin{array}{c} 1, 1\\ m+1,m+1 \end{array}\bigg |  \nu  \scal{z,w} \right)\right) .
   \end{align}
\end{theorem}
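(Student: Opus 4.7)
The plan is to mimic, in several complex variables, the approach used for Theorem \ref{Mthm1}, namely to build an explicit orthonormal basis of monomials and then sum the reproducing kernel $K(z,w)=\sum_p e_p(z)\overline{e_p(w)}$ by separating the two natural blocks $|p|<m$ and $|p|\ge m$.

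First, I would verify that the normalized monomials form a Hilbert basis. Orthogonality of the $z^p$ under $\scal{\cdot,\cdot}_{\nu,m}$ is automatic: the inner product splits between $f_{1,m}$ and the derivative part of $f_{2,m}$; on the low-degree block it reduces to the usual Segal-Bargmann pairing, and for $|p|,|q|\ge m$ with $p\ne q$, every $\scal{D^l z^p, D^l z^q}_\nu$ vanishes by orthogonality of distinct monomials in $\mathcal{F}^{2,\nu}(\C^n)$. Density of polynomials in $\BDmn$ follows from the density of polynomials in $\mathcal{F}^{2,\nu}(\C^n)$ applied to $f_{1,m}$ and to each $D^l f_{2,m}$. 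Next I would compute the norms $\norm{z^p}_{\nu,m}^2$. For $|p|<m$ we just have $\norm{z^p}_\nu^2=\pi^n p!/\nu^{n+|p|}$. For $|p|\ge m$, using $D^l z^p = (p!/(p-l)!)\, z^{p-l}$ when $l\le p$ componentwise and zero otherwise,
\begin{align*}
\sum_{|l|=m}\frac{m!}{l!}\norm{D^l z^p}_\nu^2
 = \frac{\pi^n (p!)^2 m!}{\nu^{n+|p|-m}}\sum_{\substack{|l|=m\\ l\le p}}\frac{1}{l!(p-l)!}
 = \frac{\pi^n\,p!\,|p|!}{\nu^{n+|p|-m}(|p|-m)!},
\end{align*}
where the key step is the multinomial Vandermonde identity $\sum_{|l|=m,\,l\le p}\binom{p}{l}=\binom{|p|}{m}$.

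Having the norms, I would check that pointwise evaluation is bounded on $\BDmn$ (so that the kernel exists) and write
\begin{equation*}
K_{\BDmn}(z,w)=\sum_{|p|<m}\frac{z^p\overline{w}^p}{\norm{z^p}_{\nu,m}^2}+\sum_{|p|\ge m}\frac{z^p\overline{w}^p}{\norm{z^p}_{\nu,m}^2}.
\end{equation*}
Grouping monomials by total degree and invoking the multinomial identity $\sum_{|p|=k}z^p\overline{w}^p/p!=\scal{z,w}^k/k!$, the first block collapses to $(\nu/\pi)^n\sum_{k<m}(\nu\scal{z,w})^k/k!$. The second block becomes
\begin{equation*}
\left(\frac{\nu}{\pi}\right)^n\scal{z,w}^m\sum_{j\ge 0}\frac{j!\,(\nu\scal{z,w})^j}{((j+m)!)^2}
\end{equation*}
after the substitution $j=k-m$.

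The main obstacle, and the last step of the proof, is identifying this tail as a hypergeometric function. Writing $(j+m)!=m!\,(m+1)_j$ and $j!=(1)_j$, the factor $j!/((j+m)!)^2$ equals $(m!)^{-2}(1)_j(1)_j/((m+1)_j(m+1)_j)$ times $1/j!$ times $j!$; more precisely
\begin{equation*}
\frac{j!}{((j+m)!)^2}=\frac{1}{(m!)^2}\cdot\frac{(1)_j(1)_j}{(m+1)_j(m+1)_j}\cdot\frac{1}{j!},
\end{equation*}
which exhibits the tail as $(m!)^{-2}\,{_2F_2}(1,1;m+1,m+1;\nu\scal{z,w})$. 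Assembling both blocks yields formula \eqref{RK-BDC} for $K_{\BDmn}(z,w)$.
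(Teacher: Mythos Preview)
Your proposal is correct and follows essentially the same route as the paper: compute the $\norm{\cdot}_{\nu,m}$-norms of the monomials via the multinomial Vandermonde identity, show they form an orthogonal Hilbert basis, verify the boundedness of point evaluations, and then sum $\sum_p z^p\overline{w}^p/\norm{z^p}_{\nu,m}^2$ by grouping over total degree and recognizing the ${_2F_2}$-series. The paper in fact just states the analogous propositions without proof, referring back to the Bergman--Dirichlet argument of Section~3, so your write-up is a faithful expansion of what the paper does.
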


%\begin{remark}\label{Rem-1}
%\begin{enumerate}
%\item For the special case $m=0$, we recover the weighted Bergman space \eqref{wBs}.
%In this case, the expression  of the reproducing kernel in \eqref{RK-m} reduces further to the Bergman reproducing kernel \eqref{RK-wBs}.
%
%\item $\alpha=0$ and $m=1$, the corresponding space is the weighted Dirichlet space and
%the expression of the reproducing kernel \eqref{RK-m} reduces to  \eqref{RKcD}, the reproducing kernel of the Dirichlet space.
%
%\item For $m=0$, the space $\BDm $ is the Bargmann-Fock space in \ref{RKfctCn}.
%\end{enumerate}
%\end{remark}

What we have done for the unit ball can be extended in an appropriate way to any $0$-centered ball $\Bnr$ of radius $R$.
Doing so, one shows that the Bargmann-Dirichlet spaces $\BDmn$ can be seen as the limit
of the Bergman-Dirichlet spaces $\ADmnr$, with $\alpha = \nu R^{2}$, as $R$ goes to infinity, in the sense that we have

\begin{theorem}\label{Mthm3}
For every fixed nonegative integer $m$ and real number $\nu>0$, the reproducing kernel
 $K_{\ADmnr}$ of the weighted Bergman-Dirichlet space $\ADmnr$, with $\alpha=\nu R^2$, converges
 pointwisely and uniformly on compact sets of $\C^n\times \C^n$ to the reproducing kernel function $K_{\BDmn}$ of
weighted Bargmann-Dirichlet space $\BDmn$.
\end{theorem}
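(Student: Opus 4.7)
The plan is to derive a closed form for $K_{\ADmnr}$ analogous to the one in Theorem~\ref{Mthm1}, specialize to $\alpha=\nu R^2$, pass to the limit as $R\to\infty$ termwise, and finally upgrade pointwise convergence to locally uniform convergence.

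Repeating verbatim the Hilbertian computation that underlies Theorem~\ref{Mthm1}, now with respect to the weight $(1-|z/R|^2)^{\alpha}d\lambda$ on $\Bnr$ (the monomials $z^p$ remain orthogonal, their norms pick up an extra factor $R^{2n+2|p|}$, and the $|p|\ge m$-part is summed by the same multinomial identity $\sum_{|q|=m,\,q\le p}\binom{p}{q}=\binom{|p|}{m}$), one obtains
\begin{align*}
K_{\ADmnr}(z,w) &= \frac{\Gamma(\alpha+n+1)}{\pi^n R^{2n}\Gamma(\alpha+1)}\sum_{k<m}(\alpha+n+1)_k\frac{\scal{z,w}^k}{k!\,R^{2k}} \\
&\quad + \frac{\Gamma(\alpha+n+1)}{\pi^n R^{2n}\Gamma(\alpha+1)}\frac{\scal{z,w}^m}{(m!)^2}\,{_3F_2}\left(\begin{array}{c}1,1,\alpha+n+1\\ m+1,m+1\end{array}\bigg|\,\frac{\scal{z,w}}{R^2}\right).
\end{align*}
Setting $\alpha=\nu R^2$ and using the elementary identities
\[
\frac{\Gamma(\nu R^2+n+1)}{R^{2n}\Gamma(\nu R^2+1)}=\prod_{j=1}^{n}\left(\nu+\tfrac{j}{R^2}\right)\xrightarrow{R\to\infty}\nu^n,\qquad \frac{(\nu R^2+n+1)_k}{R^{2k}}=\prod_{j=0}^{k-1}\left(\nu+\tfrac{n+1+j}{R^2}\right)\xrightarrow{R\to\infty}\nu^k,
\]
each term of $\sum_{k<m}$ converges to the corresponding term of $(\nu/\pi)^n\sum_{k<m}(\nu\scal{z,w})^k/k!$, and the $k$-th coefficient of the ${_3F_2}$-series evaluated at $\scal{z,w}/R^2$ converges to the $k$-th coefficient of the ${_2F_2}$-series evaluated at $\nu\scal{z,w}$. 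Combined, these match exactly the formula for $K_{\BDmn}$ from Theorem~\ref{Mthm2}, yielding pointwise convergence.

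To promote this to convergence uniform on compact subsets $K\subset \C^n\times\C^n$ with $|\scal{z,w}|\le M$ on $K$, I would exhibit an $R$-uniform summable majorant for the ${_3F_2}$-series. Rewriting its $k$-th term as $\tfrac{k!\,(\nu R^2+n+1)_k}{((m+1)_k)^2 R^{2k}}\scal{z,w}^k$ and splitting the Pochhammer product at the threshold $k\sim R^2$---each factor $\nu+(n+1+j)/R^2$ stays below a fixed constant for $j\le R^2$, while for $k\gg R^2$ the remaining growth is absorbed by $((m+1)_k)^2=((m+k)!/m!)^2$ via elementary Stirling bounds---one constructs a majorant of the form $C_k|\scal{z,w}|^k$ with $\sum_k C_k<+\infty$ uniformly in $R\ge R_0(M,n,\nu,m)$. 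Lebesgue dominated convergence for series then finishes the proof.

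The main obstacle is precisely this uniform estimate, since $(\nu R^2+n+1)_k$ behaves like $(\nu R^2)^k$ only in the regime $k\ll R^2$ and accelerates toward $k!$-type growth once $k$ dominates $R^2$; the two regimes demand separate treatment, and it is only the double Pochhammer denominator $((m+1)_k)^2$ (combined with the $R^{-2k}$ factor) that tames the large-$k$ tail. A cleaner alternative, should the elementary splitting prove awkward, is to invoke Vitali's theorem: since $K_{\ADmnr}(z,w)$ depends sesqui-holomorphically on $(z,w)$, the same bookkeeping yields locally uniform boundedness of the family $\{K_{\ADmnr}\}_R$ on compacta, and pointwise convergence then upgrades automatically to locally uniform convergence.
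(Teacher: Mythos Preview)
Your approach is essentially the paper's own: derive the closed form of $K_{\ADmnr}$ on the ball of radius $R$, set $\alpha=\nu R^2$, and pass to the limit in the prefactor and in the ${_3F_2}$-series. The paper packages the two limits as the Binet formula $\Gamma(x+a)/\Gamma(x-b)=x^{a-b}(1+O(1/x))$ and a separate lemma stating $\lim_{x\to\infty}{_3F_2}(b,c,x+a;d,e;z/x)={_2F_2}(b,c;d,e;z)$, whereas you compute both limits elementarily term by term; this is a cosmetic difference.

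In fact you go further than the paper, which only argues pointwise convergence and does not address the locally uniform statement in the theorem. Your majorant argument works, but the ``splitting at $k\sim R^2$'' is more elaborate than needed: since each factor $\nu+(n+1+j)/R^2$ is monotone decreasing in $R$, the $k$-th term of the ${_3F_2}$-series is dominated, for all $R\ge R_0$, by its value at $R=R_0$; choosing $R_0^2>M$ puts that majorant inside the disk of convergence of ${_3F_2}$ with \emph{fixed} parameters, so it is automatically summable. This gives the $R$-independent dominant directly, and your Vitali alternative is then unnecessary.
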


This is motivated by the fact that the flat Hermitian geometry on $\C^n$
can be approximated by the complex hyperbolic geometry of the balls $\Bnr$ of radius $R>0$
associated to an appropriate scaled Bergman K\"ahler metric \cite{GhIn2005JMP}. Indeed, we have to deal with the scaled measure
$$ \muar(z) := \left(1-\left|\frac{z}{R^2}\right|^{2}\right)^{\nu R^2}\meslambda(z) .$$

\begin{remark}
To not cumbersome with additional notations in proving our main results, we restrict ourself to the case $n=2$, the general case can be investigated in a similar way.
\end{remark}

\section{The generalized Bergman-Dirichlet space on $\B^2$}

 The $2$-dimensional complex space $\C^2$ is endowed with the inner product $\scal{z,w}=z_1\overline{w_1}+z_2\overline{w_2}$
for $z=(z_1,z_2)$ and $w=(w_1,w_2)$ in $\C^2$. Its associated norm
is given by $|z|=\sqrt{|z_1|^2+|z_2|^2}$.
Let us denote by $S^3=\partial\B^2$ be the unit sphere of $\C^2$ viewed as the boundary of the unit ball $\B^2=\{z=(z_1,z_2)\in\C^2 \;\;/\;\; |z|^2<1\}$. On $\B^2$, we consider the weighted measure
$$\mes(z)=(1-|z|^2)^\alpha \meslambda(z),$$
where $\alpha\in\R$ and $\meslambda(z)=\left(\frac{i}{2}\right)^2dz_1\wedge
d\overline{z_1}\wedge dz_2\wedge d\overline{z_2}$ is the usual Lebesgue measure.
Notice that the measure $\mu_\alpha$ is finite on $\B^2$ if and only if $\alpha>-1$, indeed we have $$\int_{\B^2}\mes=\pi^2\frac{\Gamma(\alpha+1)}{\Gamma(\alpha+3)}.$$
For given multi-index $p=(p_1,p_2)\in\Z^+\times\Z^+$, we use as usual $|p|=p_1+p_2$, $p!=p_1!p_2!$ and set
$$z^p=z_1^{p_1} z_2^{p_2}=:\varphi_p(z).$$

In order to prove Theorem \ref{Mthm1}, we begin with the following

\begin{proposition}\label{prop1}
The monomials $\varphi_p(z)=z^p$; $p\in\Z^+\times\Z^+$, belong to $\ADm$ if and
	only if $\alpha>-1$. Moreover, for $\alpha>-1$, they form an orthogonal system in  $\ADm$ with
	\begin{align}
	||\varphi_p||_{\alpha,m}^2= \pi^2\Gamma(\alpha+1)
    \left\{ \begin{array}{ll}
    	\dfrac{p!}{\Gamma(|p|+\alpha+3)}  & \quad \mbox{ if } |p|<m\\
    	\dfrac{|p|(|p|-1)\cdots(|p|-m+1)p!}{\Gamma(|p|-m+\alpha+3)}  & \quad \mbox{ if } |p|\geq m
	\end{array}
	\right. .
    \end{align}
\end{proposition}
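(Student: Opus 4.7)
\smallskip
\noindent\textbf{Proof plan.} The plan is to reduce the computation to the standard weighted $L^{2,\alpha}$-norm of a single monomial on $\B^2$, compute that by a Dirichlet-type integral, and then assemble the cases $|p|<m$ and $|p|\geq m$ separately according to the decomposition $\varphi_p = (\varphi_p)_{1,m} + (\varphi_p)_{2,m}$.

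\smallskip
\noindent First I would compute $\|z^p\|_\alpha^2 = \int_{\B^2}|z^p|^2(1-|z|^2)^\alpha\,d\lambda(z)$ by passing to polar coordinates $z_j=r_je^{i\theta_j}$, performing the trivial angular integrations to get a factor $(2\pi)^2$, then substituting $s_j=r_j^2$ to recover a Dirichlet integral on the simplex $\{s_1,s_2>0,\ s_1+s_2<1\}$. The standard evaluation
\[
\int_{s_1+s_2<1}s_1^{p_1}s_2^{p_2}(1-s_1-s_2)^{\alpha}\,ds_1ds_2=\frac{p_1!\,p_2!\,\Gamma(\alpha+1)}{\Gamma(|p|+\alpha+3)}
\]
is valid (and convergent) iff $\alpha>-1$, giving $\|z^p\|_\alpha^2 = \pi^2 p!\,\Gamma(\alpha+1)/\Gamma(|p|+\alpha+3)$. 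Together with the fact that the norm $\|\cdot\|_{\alpha,m}$ dominates (up to constants on each finite-degree component) the $\|\cdot\|_\alpha$-norm, this proves the first assertion: $\varphi_p\in\ADm$ iff $\alpha>-1$.

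\smallskip
\noindent For the explicit value of $\|\varphi_p\|_{\alpha,m}^2$, I would split into two cases. If $|p|<m$, then $(\varphi_p)_{2,m}=0$, so the second sum in \eqref{norm} vanishes and $\|\varphi_p\|_{\alpha,m}^2=\|\varphi_p\|_\alpha^2$, giving the first line of the claim. If $|p|\geq m$, then $(\varphi_p)_{1,m}=0$ and I must evaluate
\[
\|\varphi_p\|_{\alpha,m}^2 = m!\sum_{|q|=m}\frac{1}{q!}\|D^q z^p\|_\alpha^2.
\]
Using $D^q z^p = \frac{p!}{(p-q)!}z^{p-q}$ when $q\leq p$ componentwise (and zero otherwise) and the formula already derived for $\|z^{p-q}\|_\alpha^2$, this becomes
\[
\frac{m!\,\pi^2\Gamma(\alpha+1)(p!)^2}{\Gamma(|p|-m+\alpha+3)}\sum_{\substack{|q|=m\\ q\leq p}}\frac{1}{q!(p-q)!}.
\]
The key combinatorial step, which I expect to be the main obstacle in disguise, is evaluating the remaining sum via Vandermonde's identity: writing $\frac{1}{q!(p-q)!}=\frac{1}{p!}\binom{p_1}{q_1}\binom{p_2}{q_2}$ and summing over $q_1+q_2=m$ yields $\frac{1}{p!}\binom{|p|}{m}=\frac{|p|!}{p!\,m!\,(|p|-m)!}$. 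Substituting back collapses the expression to $\pi^2\Gamma(\alpha+1)\,p!\,|p|(|p|-1)\cdots(|p|-m+1)/\Gamma(|p|-m+\alpha+3)$, which is the second line of the proposition.

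\smallskip
\noindent Finally, for orthogonality with respect to $\scal{\cdot,\cdot}_{\alpha,m}$, I would note that monomials of distinct multi-indices are already $\scal{\cdot,\cdot}_\alpha$-orthogonal (the angular integrations vanish), and the same argument applies to any pair $D^q z^p$, $D^q z^{p'}$ with $p\neq p'$, since $D^q z^p$ and $D^q z^{p'}$ remain distinct monomials (up to constants). The cross terms between $\varphi_p$ with $|p|<m$ and $\varphi_{p'}$ with $|p'|\geq m$ vanish automatically from the definition of $\scal{\cdot,\cdot}_{\alpha,m}$, since for such pairs $(\varphi_p)_{1,m}=\varphi_p,\ (\varphi_p)_{2,m}=0$ while $(\varphi_{p'})_{1,m}=0,\ (\varphi_{p'})_{2,m}=\varphi_{p'}$, so both terms in \eqref{innerp} are zero.
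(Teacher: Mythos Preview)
Your proof is correct and follows essentially the same architecture as the paper's: compute $\|z^p\|_\alpha^2$, treat the cases $|p|<m$ and $|p|\geq m$ separately via the decomposition, differentiate, and close with a combinatorial identity. The only cosmetic differences are (i) you evaluate $\|z^p\|_\alpha^2$ through product polar coordinates and the Dirichlet integral on the simplex, whereas the paper uses the global decomposition $z=r\xi$ and integrates over $S^3$; and (ii) you invoke Vandermonde's convolution for $\sum_{|q|=m}\binom{p_1}{q_1}\binom{p_2}{q_2}=\binom{|p|}{m}$, while the paper appeals to its ``multi-monomial formula'' \eqref{s-nomial}, which is the same identity in falling-factorial guise.
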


\begin{proof}
	% By \cite{Rudin}
	%also the Lebesgue measure in $\B^n$ can be normalized by $\pi^n/n!$
	%i.e $d\lambda=\pi^n/n!dV$, with $\int_{\B^n}dV=1$.\\
	Orthogonality of the monomials $\varphi_p(z)=z^p$; $p\in\Z^+\times\Z^+$, with respect to the inner product $\scal{\cdot,\cdot}_{\alpha,m}$
follows from the fact that $D^q\varphi_p(z)=\dfrac{p!}{(p-q)!}z^{p-q}$ and the orthogonality of
the monomials in classical weighted Bergman space. Furthermore, for $|p|<m$, we have
	\begin{align*}
	||\varphi_p||_{\alpha,m}^2=||\varphi_p||_{\alpha}^2=\int_{\B^2}|z^p|^2\mes .
	\end{align*}
	By integrating in polar coordinates $z=r\xi$ for $r\in[0,1[$ and $\xi\in S^{3}$, we get
	\begin{align*}
	||\varphi_p||_{\alpha}^2=\int_0^1r^{2|p|+3}(1-r^2)^\alpha dr\int_{S^{3}}|\xi^p|^2d\sigma(\xi),
	\end{align*}
	where $d\sigma$ is the	area measure on $S^3$. Now, to compute the integral
	$\int_{S^{3}}|\xi^p|^2d\sigma(\xi)$, we use the coordinates $\xi_1=e^{i\theta_1}\sin(\varphi)$,
	$\xi_2=e^{i\theta_2}\cos(\varphi)$, where $\varphi\in[0,\pi/2]$ and where $\theta_1$ and $\theta_2$ can take any
	value between $0$ and $2\pi$, we find
	\begin{align*}
	\int_{S^{3}}|\xi^p|^2d\sigma(\xi)=(2\pi)^2\int_0^{\pi/2}(\sin(\varphi))^{2p_1+1}(\cos(\varphi))^{2p_2+1}d\varphi=2\pi^2\frac{p!}{(|p|+1)!}.
	\end{align*}
	Making the change of variable $t=r^2$ yields
	$$||\varphi_p||_{\alpha}^2=\pi^2\frac{p!}{\Gamma(|p|+2)}\int_0^1t^{|p|+1}(1-t)^\alpha
	dt.$$
	The involved integral is the Euler function, which converges if and only if $\alpha>-1$.
	Now, we consider the case of $|p|\geq m$. Indeed, in this case, for $q=(q_1,q_2)$, we have
	$D^q\varphi_p(z)=\dfrac{p!}{(p-q)!}z^{p-q}$
	with the convention $D^q\varphi_p(z)=0$ if $p_1<q_1$ or $p_2<q_2$. Therefore,
	% $$D^l\varphi_p(z)=\prod_{j=1}^{l_1-1}(p_1-j)\prod_{j=1}^{l_2-1}(p_2-j)z^{p-l},$$
	\begin{align}\label{normm}
 ||\varphi_p||_{\alpha,m}^2 &=\sum_{|q|=m}\frac{m!}{q!}\left(\frac{p!}{(p-q)!}\right)^2 ||\varphi_{p-q}||_{\alpha}\nonumber
       \\&=\pi^2\frac{p!}{\Gamma(|p|-m+2)}\int_0^1t^{|p|-m+1}(1-t)^\alpha
	dt\sum_{|q|=m}\frac{m!}{q!}\left(\frac{p!}{(p-q)!}\right).
	\end{align}	
	Thus, the norm
	$||\varphi_p||_{\alpha,m}$ is finite if and only if $\alpha>-1$. In this case, \eqref{normm} reduces further to
	\begin{align*}
	||\varphi_p||_{\alpha,m}^2= \pi^2\Gamma(\alpha+1) \left\{
	\begin{array}{ll}
	\dfrac{p!}{\Gamma(|p|+\alpha+3)}      &  \quad \mbox{if } |p|<m\\
	\dfrac{|p|(|p|-1)\cdots(|p|-m+1)p!}{\Gamma(|p|-m+\alpha+3)} & \quad \mbox{ if
	} |p|\geq m
	\end{array}
	\right. , \end{align*}
	thanks to the multi-monomial formula
\begin{equation}\label{s-nomial}
\prod_{j=0}^{k-1}(z_1+z_2-j)=k!\sum_{|p|=k}\frac{\prod_{j=0}^{p_1-1}(z_1-j)
	\prod_{j=0}^{p_2-1}(z_2-j)}{p!},
\end{equation}
which will be used systematically in the sequel.
\end{proof}

The previous proposition shows that the space $\ADm$ is non trivial if and only if $\alpha>-1$.
From now on, we assume that $\alpha>-1$.

\begin{proposition}\label{pro2} A holomorphic function $f$ belongs to $\ADm$ if and only if its
	Taylor coefficients satisfy the condition
	$$\sum_{|p|\geq m}\frac{|p|(|p|-1)\cdots(|p|-m+1)p!}{\Gamma(|p|-m+\alpha+3)}|a_p|^2<+\infty.$$
    Furthermore, we have
	$$||f||_{\alpha,m}^2=\pi^2\Gamma(\alpha+1)\sum_{p\in \N^2}\gamma_{\alpha,p}|a_p|^2,$$
	where $\gamma_{\alpha,p}$ stands for
   $$\gamma_{\alpha,p}=\left\{
	\begin{array}{ll}
	\dfrac{p!}{\Gamma(|p|+\alpha+3)} & \quad \mbox{if } |p|<m\\
	\dfrac{p!|p|(|p|-1)\cdots(|p|-m+1)}{\Gamma(|p|-m+\alpha+3)} & \quad \mbox{ if
	} |p|\geq m
	\end{array}
	\right. .$$
\end{proposition}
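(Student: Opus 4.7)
The strategy is to reduce $\|f\|_{\alpha,m}^2$ to a weighted sum over Taylor coefficients via the orthogonal system $\{\varphi_p\}$ already identified in Proposition \ref{prop1}. Writing $f = f_{1,m} + f_{2,m}$ as in \eqref{split}, the definition \eqref{norm} gives $\|f\|_{\alpha,m}^2 = \|f_{1,m}\|_\alpha^2 + m!\sum_{|q|=m}\frac{1}{q!}\|D^q f_{2,m}\|_\alpha^2$. The polynomial piece $f_{1,m}=\sum_{|p|<m}a_p\varphi_p$ is handled directly by orthogonality of monomials in the classical weighted Bergman space and the $|p|<m$ branch of Proposition \ref{prop1}, yielding $\pi^2\Gamma(\alpha+1)\sum_{|p|<m}\tfrac{p!}{\Gamma(|p|+\alpha+3)}|a_p|^2$, which is exactly the $|p|<m$ contribution of the asserted formula.

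For the remaining piece, termwise differentiation (legitimate because Taylor series of holomorphic functions converge uniformly on compact subsets of $\B^2$) gives $D^q f_{2,m}(z)=\sum_{|p|\geq m,\,p\geq q}a_p\tfrac{p!}{(p-q)!}z^{p-q}$. Applying the Parseval identity for the standard weighted Bergman space $\mathcal{A}^{2,\alpha}(\B^2)$ to this holomorphic function produces $\|D^q f_{2,m}\|_\alpha^2=\sum_{|p|\geq m,\,p\geq q}|a_p|^2\bigl(\tfrac{p!}{(p-q)!}\bigr)^2\|\varphi_{p-q}\|_\alpha^2$. Swapping the summations over $p$ and $q$, the coefficient of $|a_p|^2$ becomes $m!\sum_{|q|=m,\,q\leq p}\tfrac{1}{q!}\bigl(\tfrac{p!}{(p-q)!}\bigr)^2\|\varphi_{p-q}\|_\alpha^2$, which is precisely the combinatorial sum already evaluated inside the proof of Proposition \ref{prop1}: via the multi-monomial identity \eqref{s-nomial} (equivalently a Vandermonde convolution) it collapses to $\pi^2\Gamma(\alpha+1)\tfrac{|p|(|p|-1)\cdots(|p|-m+1)\,p!}{\Gamma(|p|-m+\alpha+3)}$, i.e.\ the $|p|\geq m$ branch of $\gamma_{\alpha,p}$. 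Adding the two contributions produces the norm formula, and both the necessity and sufficiency of the coefficient condition follow since one side is finite if and only if the other is.

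The main obstacle is making the Parseval step for $D^q f_{2,m}$ fully rigorous, as it concerns an infinite series rather than a finite orthogonal sum. I would justify it by truncating to $f_{2,m}^N=\sum_{m\leq |p|\leq N}a_p\varphi_p$, where orthogonality of a finite family immediately yields $\|D^q f_{2,m}^N\|_\alpha^2=\sum_{m\leq |p|\leq N,\,p\geq q}|a_p|^2\bigl(\tfrac{p!}{(p-q)!}\bigr)^2\|\varphi_{p-q}\|_\alpha^2$, and then passing $N\to\infty$. If the coefficient series converges, monotone convergence of the right-hand side together with uniform-on-compacta convergence $D^q f_{2,m}^N\to D^q f_{2,m}$ identifies the $L^{2,\alpha}$ limit and places $f\in\ADm$; conversely, if $f\in\ADm$, Fatou's lemma applied to $|D^q f_{2,m}^N|^2$ bounds the coefficient series by $\|D^q f_{2,m}\|_\alpha^2$ and another monotone passage upgrades the bound to equality, closing both directions simultaneously.
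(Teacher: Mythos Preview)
Your argument is correct and follows essentially the same route as the paper: orthogonality handles $f_{1,m}$, Parseval for the Bergman space computes each $\|D^q f_{2,m}\|_\alpha^2$, and the multi-monomial identity \eqref{s-nomial} collapses the sum over $q$. The only cosmetic difference is in the limiting justification: the paper restricts to compact sub-balls $\B^2(\rho)$ and lets $\rho\to 1$ via monotone convergence, whereas you truncate the Taylor series at level $N$ and let $N\to\infty$ with monotone convergence/Fatou---both are standard and equivalent here, and your version has the minor advantage of making the two directions of the ``if and only if'' explicit.
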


\begin{proof}
 Notice first that according to Proposition \ref{prop1}, we have
 $$
 ||f_{1,m}||_\alpha^2 =\pi^2\Gamma(\alpha+1)\sum_{|p|<m}\frac{p!}{\Gamma(\alpha+|p|+3)}|a_p|^2.
 $$
 For $l=(l_1,l_2)\in\Z^+\times\Z^+$ such that $|l|=m$, we have
% $$D^l\varphi_p(z)=p(p-1)\cdots(p-l+1)z^{p-l},$$ where
$D^l\varphi_p(z)=\frac{p!}{(p-l)!}z^{p-l}$, thus
% $p(p-1)\cdots(p-l+1)=p_1(p_1-1)\cdots(p_1-l_1+1).p_2(p_2-1)\cdots(p_2-l_2+1).$
%$D^lf(z)=D^lf_{2,m}(z)$, to get
\begin{align*}
\int_{\B^2}|D^lf_{2,m}(z)|^2\mes(z)
&=\int_{\B^2}\left(\sum_{|p|\geq m}\frac{p!}{(p-q)!}a_{p}z^{p-q}\right)
\overline{\left(\sum_{|{p'}|\geq m}\frac{{p'}!}{({p'}-q)!}a_{{p'}}z^{{p'}-q}\right)}\mes(z)
\\&=\lim\limits_{\rho\rightarrow 1}
\int_{\B^2(\rho)}\left(\sum_{|p|,|{p'}|\geq m}\frac{p!}{(p-q)!}\frac{{p'}!}{({p'}-q)!}a_p\overline{a_{p'}}z^{p-q}\overline{z^{{p'}-q}}\right)\mes(z).
\end{align*}
From the compacticity of  $\B^2(\rho)$ and the orthogonality of $(\varphi_p)_p$ in the space $L^2(\B^2(\rho),\mes)$, we get
\begin{align*}
\int_{\B^2}|D^qf_{2,m}(z)|^2\mes(z)=\lim\limits_{\rho\rightarrow 1}
\sum_{p}\left( \frac{p!}{(p-q)!}\right)^2 |a_p|^2\int_{\B^2(\rho)}|z^{p-q}|^2\mes(z).
\end{align*}
By applying discrete monotone convergence  theorem, we obtain
%$$\int_{\B^2}|D^lf(z)|^2\mes(z)=\sum_{p,q}(p(p-1)\cdots(p-l+1)q(q-1)\cdots(q-l+1))a_p\overline{a_q}\int_{\B^2}z^{p-l}\overline{z^{q-l}}\mes(z)$$
%according to lemma\ref{lemma1}, we get
$$
\int_{\B^2}|D^qf(z)|^2\mes(z)=\pi^2\Gamma(\alpha+1)\sum_{|p|\geq
	m}\left( \frac{p!}{(p-q)!}\right)^2\frac{(p-q)!}{\Gamma(\alpha+|p-q|+3)}|a_p|^2.
$$
Finally, in view of \eqref{s-nomial}, we get
$$
\sum_{|q|=m}\int_{\B^2}|D^qf_{2,m}(z)|^2\mes(z)=\pi^2\Gamma(\alpha+1)\sum_{|p|\geq
	m}\frac{(|p|(|p|-1)\cdots(|p|-q+1))p!}{\Gamma(\alpha+|p|-m+3)}|a_p|^2.
$$
This completes the proof.
	\end{proof}

%\begin{remark}The integral \ref{norm0} can be compute an other way, in
%fact ;\\
%By (\cite{Rudin} p 16) we have
%$$\int_{S^{2n-1}}|\xi^m|^2d\sigma(\xi)=\frac{(n-1)!p!}{\Gamma(|p|+n)},$$
%whit $d\sigma$ is normalized measure in $S^{2n-1}$. By \cite{Rudin}
%also the Lebesgue measure in $\B^n$ can be normalized by $\pi^n/n!$
%i.e $d\lambda=\pi^n/n!dV$, with $\int_{\B^n}dV=1$.\\
%Now we apply integration in polar coordinates $z=r\xi$ for
%$r\in[0,1[$ and $\xi\in S^{3}$, we get
%\begin{align*}||\varphi_m||_{\alpha,0}^2&=\frac{\pi^2}{2}\int_{\B^2}|z^p|^2(1-|z|^2)^\alpha
%dV(z)\\&=2\pi^2\int_0^1r^{2|p|+3}(1-r^2)\alpha\int_{S^{3}}|\xi^m|^2d\sigma(\xi)\\&
%=\pi^2\frac{p!\Gamma(\alpha+1)}{\Gamma(|p|+\alpha+3)}.\end{align*}\end{remark}

In order to establish the second result in this section we need to the following

\begin{lemma} \label{lem:ev}
Let $f$ be a given holomorphic function $f$ on $\B^2$. Then, for every fixed $z\in\B^2$ we have
  \begin{equation}\label{evaluation}
   |f(z)| \leq \frac{1}{\pi^2}\frac{\Gamma(\alpha+3)}{\Gamma(\alpha+1)}\left(\sum_{k<m}(\alpha+3)_k\frac{|z|^k}{k!}+\frac{1
	}{(1-|z|^2)^{\alpha+3}}\right) ||f||_{\alpha,m}.
    \end{equation}
 Moreover,  for every compact set $K$ of $\B^2$ there is a constant $c_K$ such that
	\begin{equation}\label{contev}
     |f(z)|\leq c_K ||f||_{\alpha,m}; \quad z\in K.
     \end{equation}
\end{lemma}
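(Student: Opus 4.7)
The plan is to derive the pointwise bound from the Taylor expansion $f(z)=\sum_{p\in(\Z^{+})^{2}}a_{p}z^{p}$ by combining Proposition \ref{pro2} with the Cauchy--Schwarz inequality. Since Proposition \ref{pro2} identifies $\|f\|_{\alpha,m}^{2}$ with the weighted $\ell^{2}$--sum $\pi^{2}\Gamma(\alpha+1)\sum_{p}\gamma_{\alpha,p}|a_{p}|^{2}$, writing
$$|f(z)|\;\leq\;\sum_{p\in(\Z^{+})^{2}}\bigl(\sqrt{\gamma_{\alpha,p}}\,|a_{p}|\bigr)\cdot\frac{|z^{p}|}{\sqrt{\gamma_{\alpha,p}}}$$
and applying Cauchy--Schwarz yields
$$|f(z)|^{2}\;\leq\;\frac{\|f\|_{\alpha,m}^{2}}{\pi^{2}\Gamma(\alpha+1)}\,\Sigma(z),\qquad \Sigma(z):=\sum_{p\in(\Z^{+})^{2}}\frac{|z^{p}|^{2}}{\gamma_{\alpha,p}}.$$
Thus the whole task is reduced to estimating $\Sigma(z)$.

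To evaluate $\Sigma(z)$ I would split it according to the two regimes appearing in the definition of $\gamma_{\alpha,p}$, using throughout the elementary multinomial identity $\sum_{|p|=k}|z^{p}|^{2}/p!=|z|^{2k}/k!$. For the low-order contribution $|p|<m$, direct substitution of $\gamma_{\alpha,p}=p!/\Gamma(|p|+\alpha+3)$ gives exactly $\Gamma(\alpha+3)\sum_{k<m}(\alpha+3)_{k}|z|^{2k}/k!$. For the high-order contribution $|p|\geq m$, setting $|p|=j+m$ with $j\geq0$ converts it into
$$\sum_{j\geq0}\frac{j!\,\Gamma(j+\alpha+3)}{\bigl((j+m)!\bigr)^{2}}\,|z|^{2(j+m)},$$
and the elementary bounds $j!/((j+m)!)^{2}\leq 1/j!$ together with $|z|^{2m}\leq 1$ control this by the classical series $\Gamma(\alpha+3)\sum_{j\geq0}(\alpha+3)_{j}|z|^{2j}/j!=\Gamma(\alpha+3)(1-|z|^{2})^{-(\alpha+3)}$. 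Combining the two contributions produces the announced bound \eqref{evaluation}.

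Finally, \eqref{contev} is a direct consequence of \eqref{evaluation}: on any compact $K\subset\B^{2}$ one has $\sup_{z\in K}|z|<1$, so the right-hand side of \eqref{evaluation} is uniformly bounded on $K$, and $c_{K}$ may be taken as this supremum times the prefactor $\pi^{-2}\Gamma(\alpha+3)/\Gamma(\alpha+1)$. I expect the only mildly delicate step to be the high-order estimate, where one must verify that the crude majorization by the generating series of $(\alpha+3)_{j}/j!$ really produces the expected $(1-|z|^{2})^{-(\alpha+3)}$ singularity; every other step is routine bookkeeping once Proposition \ref{pro2} is at hand.
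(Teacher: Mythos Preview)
Your approach is exactly the paper's: apply Cauchy--Schwarz to the Taylor expansion using the weights $\gamma_{\alpha,p}$ from Proposition~\ref{pro2}, compute the low-order part via the multinomial identity, and dominate the high-order part by the binomial series $\sum_{j\geq 0}(\alpha+3)_j|z|^{2j}/j!=(1-|z|^2)^{-(\alpha+3)}$; the only cosmetic difference is that the paper drops the descending factorial $|p|(|p|-1)\cdots(|p|-m+1)\geq 1$ directly, whereas you reindex first and use $j!/((j+m)!)^2\leq 1/j!$, which amounts to the same thing.

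One remark: what your argument (and the paper's own proof) actually produces is a bound on $|f(z)|^2$, namely
\[
|f(z)|^2\;\leq\;\frac{\Gamma(\alpha+3)}{\pi^2\Gamma(\alpha+1)}\left(\sum_{k<m}(\alpha+3)_k\frac{|z|^{2k}}{k!}+\frac{1}{(1-|z|^2)^{\alpha+3}}\right)\|f\|_{\alpha,m}^{2},
\]
so the displayed form \eqref{evaluation} as written in the paper (with $|z|^k$ and no square root) is a minor misprint rather than something you should expect to reproduce literally; this has no effect on the conclusion \eqref{contev}, which is all that is used afterwards.
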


\begin{proof} By Cauchy-Schwarz inequality, we get
	$$|f(z)|\leq\frac{||f||_{\alpha,m}}{\pi^2\Gamma(\alpha+1)}\left(\sum_{|p|<m}\frac{\Gamma(|p|+\alpha+3)}{p!}|z^p|^2+\sum_{|p|\geq
     m}\frac{\Gamma(|p|-m+\alpha+3)}{p!|p|(|p|-1)\cdots(|p|-m+1)}|z^p|^2\right).$$
	For $|p|\geq m$, we get $|p|(|p|-1)\cdots(|p|-m+1)\geq1$ and therefore we get
	\begin{align*}\sum_{|p|\geq m}\frac{\Gamma(|p|-m+\alpha+3)}{p!|p|(|p|-1)\cdots(|p|-m+1)}|z^p|^2
	&\leq\sum_{|p|\geq m}
	\frac{\Gamma(|p|-m+\alpha+3)}{p!}|z^p|^2
	\\&\leq \sum_{k=0}^\infty\frac{\Gamma(k+\alpha+3)}{k!}\sum_{|p|=k}\frac{|p|!}{p!}|z^p|^2
	\\&\leq \sum_{k=0}^\infty\frac{\Gamma(k+\alpha+3)}{k!}(|z|^2)^k
	\\&\leq \Gamma(\alpha+3)\frac{1}{(1-|z|^2)^{\alpha+3}}.
	\end{align*}
    Whence
	$$
    |f(z)| \leq \frac{1}{\pi^2}\frac{\Gamma(\alpha+3)}{\Gamma(\alpha+1)}\left(\sum_{k<m}(\alpha+3)_k\frac{|z|^k}{k!}+\frac{1
	}{(1-|z|^2)^{\alpha+3}}\right) ||f||_{\alpha,m}.
    $$
    Since, the function
	$z\rightarrow\sum\limits_{k<m}(\alpha+3)_k|z|^k+\frac{1 }{(1-|z|^2)^{\alpha+3}}$ is bounded  on $\B^2$ for being continuous, it follows that
    for every compact set $K$ of $\B^2$ there exists a constant $c_K$ such that
	$ |f(z)|\leq c_K ||f||_{\alpha,m}$ for every $z\in K$.
\end{proof}

We assert

\begin{proposition}
The space $\ADm$ is a Hilbert space and the monomials $\varphi_p$; $p\in\Z^+\times\Z^+$, form an orthogonal basis of it.
\end{proposition}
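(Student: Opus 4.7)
The orthogonality of the monomials $\varphi_p$ is already proved in Proposition \ref{prop1}, so only the completeness of $(\ADm, \scal{\cdot,\cdot}_{\alpha,m})$ and the totality of the monomial system remain. The guiding idea is that Proposition \ref{pro2} realizes $\ADm$ isometrically as a weighted $\ell^2$-space on the Taylor coefficients, while the pointwise evaluation bound \eqref{contev} ensures that norm convergence is compatible with local uniform convergence of holomorphic functions.

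For completeness, I would take a Cauchy sequence $(f_n)$ in $\ADm$ with Taylor expansions $f_n = \sum_p a_p^{(n)} \varphi_p$. The estimate \eqref{contev} supplies, for each compact $K \subset \B^2$, a constant $c_K$ with $|f_n(z) - f_k(z)| \leq c_K \|f_n - f_k\|_{\alpha,m}$ on $K$, so $(f_n)$ is uniformly Cauchy on every compact subset of $\B^2$ and converges locally uniformly to some holomorphic function $f = \sum_p a_p \varphi_p$. Local uniform convergence on a neighborhood of $0$ forces $a_p^{(n)} \to a_p$ for every multi-index $p$. On the other hand, Proposition \ref{pro2} identifies $\ADm$ isometrically with the weighted sequence space $\{(b_p)\colon \sum_p \gamma_{\alpha,p} |b_p|^2 < \infty\}$; since $(f_n)$ is Cauchy in $\ADm$, the associated weighted coefficient vectors are Cauchy in $\ell^2$ and converge to some limit which, by uniqueness of componentwise limits, must equal $(a_p)$. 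Hence $f \in \ADm$ and $\|f_n - f\|_{\alpha,m} \to 0$.

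For the basis property, totality of the $\varphi_p$ follows by truncating the Taylor expansion. Given $f = \sum_p a_p \varphi_p \in \ADm$, the partial sums $S_N = \sum_{|p| \leq N} a_p \varphi_p$ satisfy, by Proposition \ref{pro2},
\begin{align*}
\|f - S_N\|_{\alpha,m}^2 = \pi^2 \Gamma(\alpha+1) \sum_{|p|>N} \gamma_{\alpha,p} |a_p|^2 \longrightarrow 0,
\end{align*}
since the right-hand side is the tail of a convergent series. Combined with the orthogonality from Proposition \ref{prop1}, this shows $\{\varphi_p\}_{p \in \Z^+ \times \Z^+}$ is an orthogonal basis. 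The only genuinely delicate step is aligning the two notions of convergence in the completeness argument, and the evaluation estimate \eqref{contev} is precisely what bridges them; once that is in place, everything reduces to bookkeeping on the weighted coefficient $\ell^2$-norm furnished by Proposition \ref{pro2}.
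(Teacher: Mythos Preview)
Your proof is correct and the totality argument is essentially identical to the paper's. The completeness argument differs slightly in execution: the paper shows, via \eqref{contev} and Weierstrass, that a Cauchy sequence $(f_p)$ and all its derivatives $D^q f_p$ converge locally uniformly to $f$ and $D^q f$, then uses that each $(D^q f_p)$ is Cauchy in $L^2(\B^2,\mes)$ and extracts an a.e.\ convergent subsequence to identify the $L^2$ limit with $D^q f$, whence $f\in\ADm$. You instead invoke the ``if and only if'' part of Proposition~\ref{pro2} to realize $\ADm$ isometrically inside a weighted $\ell^2$ coefficient space, use completeness of that sequence space, and match the $\ell^2$ limit with the Taylor coefficients of the locally uniform limit. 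Your route is arguably cleaner, as it avoids the subsequence extraction and the separate treatment of each $D^q$; the paper's route is more hands-on with the definition \eqref{norm} of the norm. Both rest on the same key inputs: the evaluation bound \eqref{contev} to produce a holomorphic limit, and Proposition~\ref{pro2} to control the norm via coefficients.
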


\begin{proof} In view of \eqref{contev}, it follows that the space $\ADm$ is a Hilbert space. Indeed,
	any Cauchy sequence $(f_p)_p$ in $\ADm$ is uniformly Cauchy sequence on every compact subset of $\B^2$. Thence, by Weierstrass' theorem,
    % \cite[page 33]{chabat}
     $(f_p)_p$  converges uniformly to a holomorphic function $f$ on $\B^2$ as well as $(D^qf_p)$ to $D^qf$.
     On the other hand, $(D^qf_p)$ is also a Cauchy sequence in the Hilbert space $L^2\left(\B^2,\mes\right)$. Thus, there exists a subsequence
	$(D^lf_{p_{p'}} )_q$ of $(D^qf_p)_p$ converging to $g\in L^2\left(\B^2,\mes(z)\right)$ pointwise almost everywhere.It
	follows that $D^qf=g\in L^2\left(\B^2,\mes\right)$ and therefore $f\in\ADm$ and $(f_p)_p$ converges to $f$ in
	$\ADm$. This proves that $\ADm$ is a Hilbert space for the norm $||.||_{\alpha,m}$. To conclude, we need only to prove that the monomials $z^p$ form a basis of $\ADm$. For this end, let $f(z)=\sum_pa_pz^p$ be a function belonging to $\ADm$ and observe that for every given integer $k$, the function
	$f_k=\sum_{|p|<k}a_pz^p$ belongs to $span\left\{\varphi_p\;;\;p\in\Z^+\times\Z^+\right\}$, the linear span of $(\varphi_p)_p$.
    Thus by Proposition \ref{pro2}, we get
	$$||f-f_k||_{\alpha,m}=\pi^2\Gamma(\alpha+1)\sum_{|p|\geq k}\gamma_{\alpha,p}|a_p|^2,$$
	for $k$ large enough. Since the involved sum is the rest of a convergent series, the sequence
	$(f_k)$ converges to $f$ with respect the norm $||.||_{\alpha,m}$. This proves that
	$$\ADm=\overline{span\left\{\varphi_p\;;\;p\in\Z^+\times\Z^+\right\}}^{||.||_{\alpha,m}}.$$ 
\end{proof}

Lemma \ref{lem:ev} shows that the evaluation map $f\longrightarrow f(z)$ is continuous, and therefore $\ADm$ is a reproducing kernel function, ccording to Riesz representation theorem. More explicitly, we have

\begin{proposition}
The reproducing kernel function of $\ADm$  is given explicitly in terms of ${_3F_2}$-sum by following closed form
\begin{align}
		K_{\ADmn}(z,w) =\frac{\Gamma(\alpha+3)}{\pi^2\Gamma(\alpha+1)}
		 & \sum\limits_{k<m}  (\alpha+3)_k\frac{\scal{z,w}^k}{k!}
		\label{RK-BDCd}
		\\ &  + \frac{\Gamma(\alpha+3)}{\pi^2\Gamma(\alpha+1)} \frac{\scal{z,w}^{m}}{(m!)^{2}} \,
		{_3F_2}\left(\begin{array}{c} 1, 1, \alpha+3\\ m+1,m+1 \end{array}\bigg |  \scal{z,w} \right) .\nonumber
		\end{align}
\end{proposition}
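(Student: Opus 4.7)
The plan is to build the kernel from the orthogonal basis produced in the previous results. Since $(\varphi_p)_{p\in\Z^+\times\Z^+}$ is an orthogonal basis of $\ADm$ with squared norms $\|\varphi_p\|_{\alpha,m}^2$ computed in Proposition \ref{prop1}, and since Lemma \ref{lem:ev} makes the evaluation functional continuous, the reproducing kernel is given by
$$K_{\ADm}(z,w)=\sum_{p\in\Z^+\times\Z^+}\frac{\varphi_p(z)\overline{\varphi_p(w)}}{\|\varphi_p\|_{\alpha,m}^2}=\sum_{p}\frac{z^p\overline{w^p}}{\|\varphi_p\|_{\alpha,m}^2}.$$
I would split this sum into the two regimes $|p|<m$ and $|p|\geq m$, matching the piecewise definition of $\|\varphi_p\|_{\alpha,m}^2$.

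For each regime, the next step is to regroup monomials by fixing $k=|p|$ and use the multinomial-type identity
$$\sum_{|p|=k}\frac{z^p\overline{w^p}}{p!}=\frac{\scal{z,w}^k}{k!},$$
which follows from $\scal{z,w}^k=(z_1\overline{w_1}+z_2\overline{w_2})^k$ expanded by the binomial theorem. Substituting the value of $\|\varphi_p\|_{\alpha,m}^2$ from Proposition \ref{prop1} for $|p|<m$, the first piece becomes
$$\frac{1}{\pi^2\Gamma(\alpha+1)}\sum_{k<m}\Gamma(k+\alpha+3)\frac{\scal{z,w}^k}{k!}=\frac{\Gamma(\alpha+3)}{\pi^2\Gamma(\alpha+1)}\sum_{k<m}(\alpha+3)_k\frac{\scal{z,w}^k}{k!},$$
using $\Gamma(k+\alpha+3)=\Gamma(\alpha+3)(\alpha+3)_k$. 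This already reproduces the first sum in the claimed formula.

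For $|p|\geq m$, the identity $k(k-1)\cdots(k-m+1)=k!/(k-m)!$ converts the weight $\|\varphi_p\|_{\alpha,m}^{-2}$ into $(k-m)!/(k!)^2$ times $\Gamma(k-m+\alpha+3)/[\pi^2\Gamma(\alpha+1)]$. After shifting the summation index $j=k-m$, factoring out $\scal{z,w}^m$, and using $(j+m)!=m!\,(m+1)_j$, $j!=(1)_j$, $\Gamma(j+\alpha+3)=\Gamma(\alpha+3)(\alpha+3)_j$, the remaining series takes the shape
$$\frac{\Gamma(\alpha+3)}{(m!)^2}\sum_{j\geq 0}\frac{(1)_j(1)_j(\alpha+3)_j}{(m+1)_j(m+1)_j}\frac{\scal{z,w}^j}{j!},$$
which is precisely the ${}_3F_2$ appearing in \eqref{RK-BDCd}. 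Summing the two contributions yields the stated closed form.

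The main obstacle is the combinatorial bookkeeping rather than any conceptual difficulty: one must carefully match the piecewise weights from Proposition \ref{prop1}, absorb the factorials produced by $k(k-1)\cdots(k-m+1)=k!/(k-m)!$ into the right Pochhammer symbols, and shift indices without losing the prefactor $\scal{z,w}^m$, so that the numerical coefficients line up exactly with the definition of ${}_3F_2$. A minor side issue is convergence of the termwise sum, which is handled by the uniform convergence on compacta granted by Lemma \ref{lem:ev} applied to the basis expansion of $K_{\ADm}(\cdot,w)$.
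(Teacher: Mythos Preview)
Your proposal is correct and follows essentially the same route as the paper: expand the kernel over the orthogonal basis $(\varphi_p)$, split into $|p|<m$ and $|p|\geq m$, collapse each $k$-level via the multinomial identity $\sum_{|p|=k}\frac{z^p\overline{w^p}}{p!}=\frac{\scal{z,w}^k}{k!}$, and then shift indices and rewrite factorials as Pochhammer symbols to recognize the ${}_3F_2$. The only extra touch you add is the explicit mention of Lemma~\ref{lem:ev} to justify termwise summation, which the paper leaves implicit.
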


\begin{proof}
Since the monomials $\varphi_p$ constitute an orthogonal basis of $\ADm$, its reproducing kernel function can be computed by the formula
	\begin{align*}
K_{\ADm}(z,w) =\sum_{p\in (\Z^+)^2}\frac{\varphi_p(z)\overline{\varphi_p(w)}}{||\varphi_p||_{\alpha,m}^2}.
\end{align*}
More explicitly, by writing  $\sum_{|p|<m}= \sum_{k<m} \sum_{|p|=k} $ and $ \sum_{|p|\geq m}^\infty= \sum_{k= m}^\infty \sum_{|p|=k}$ and
   using the  multi-monomial formula $$\sum_{|p|=k}\frac{z^p\bar{w}^p}{p!} = \frac{(z_1\bar{w_1}+z_2\bar{w_2})^k}{k!} =  \frac{\scal{z,w}^k}{k!},$$
   %that follows from the multi-monomial formula $\sum\limits_{|p|=k}\frac{z_1^{p_1}z_2^{p_2}}{p!}=\dfrac{(z_1+z_2)^k}{k!}$,
   we get
	\begin{align*}
K_{\ADm}(z,w) &= \frac{1}{\pi^2\Gamma(\alpha+1)} \left\{\sum_{|p|<m}\Gamma(|p|+\alpha+3) \frac{z^p\bar{w}^p}{p!}
	+ \sum_{|p|\geq m}^\infty 	\dfrac{\Gamma(|p|-m+\alpha+3)}{|p|(|p|-1)\cdots(|p|-m+1)} \frac{z^p\bar{w}^p}{p!}\right\}
%\\ &= \frac{1}{\pi^2\Gamma(\alpha+1)} \left\{\sum_{k<m} \sum_{|p|=k}\Gamma(|p|+\alpha+3) \frac{z^p\bar{w}^p}{p!}
%	+ \sum_{k= m}^\infty \sum_{|p|=k} 	\dfrac{\Gamma(|p|-m+\alpha+3)}{|p|(|p|-1)\cdots(|p|-m+1)} \frac{z^p\bar{w}^p}{p!}\right\}
\\ &= \frac{1}{\pi^2\Gamma(\alpha+1)} \left\{\sum_{k<m} \Gamma(k+\alpha+3)  \sum_{|p|=k}\frac{z^p\bar{w}^p}{p!}
	+ \sum_{k= m}^\infty \dfrac{\Gamma(k-m+\alpha+3)}{k(k-1)\cdots(k-m+1)}  \sum_{|p|=k} 	\frac{z^p\bar{w}^p}{p!}\right\}
\\& = \frac{1}{\pi^2\Gamma(\alpha+1)} \left\{\sum_{k<m} \Gamma(k+\alpha+3)  \frac{\scal{z,w}^k}{k!}
	+ \sum_{k= m}^\infty \dfrac{\Gamma(k-m+\alpha+3)}{k(k-1)\cdots(k-m+1)}  \frac{\scal{z,w}^k}{k!}\right\}.
\end{align*}
Therefore, making use of the Pochhammer symbol, it follows
	\begin{align*}
K_{\ADm}(z,w) &= \frac{\Gamma(\alpha+3)}{\pi^2\Gamma(\alpha+1)} \left\{\sum_{k<m} (\alpha+3)_k  \frac{\scal{z,w}^k}{k!}
	+ \sum_{k= m}^\infty \dfrac{(\alpha+3)_{k-m}}{k(k-1)\cdots(k-m+1)}  \frac{\scal{z,w}^k}{k!}\right\}
%\\ &= \frac{\Gamma(\alpha+3)}{\pi^2\Gamma(\alpha+1)} \left\{\sum_{k<m} (\alpha+3)_k  \frac{\scal{z,w}^k}{k!}
%	+ \sum_{k' = 0}^\infty \dfrac{(\alpha+3)_{k'}}{(k'+m)(k'+m-1)\cdots(k'+1)}  \frac{\scal{z,w}^{k'+m}}{(k'+m)!}\right\}
%\\ &= \frac{\Gamma(\alpha+3)}{\pi^2\Gamma(\alpha+1)} \left\{\sum_{k<m} (\alpha+3)_k  \frac{\scal{z,w}^k}{k!}
%	+ \frac{\scal{z,w}^{m}}{1} \sum_{k' = 0}^\infty \dfrac{(\alpha+3)_{k'} k'!}{(k'+m)!(k'+m)(k'+m-1)\cdots(k'+1)}  \frac{\scal{z,w}^{k'}}{k'!}\right\}
\\ &= \frac{\Gamma(\alpha+3)}{\pi^2\Gamma(\alpha+1)} \left\{\sum_{k<m} (\alpha+3)_k  \frac{\scal{z,w}^k}{k!}
	+  \scal{z,w}^{m}  \sum_{k' = 0}^\infty \dfrac{(\alpha+3)_{k'} (k'!)^2}{((k'+m)!)^2 }  \frac{\scal{z,w}^{k'}}{k'!}\right\}
\\ &= \frac{\Gamma(\alpha+3)}{\pi^2\Gamma(\alpha+1)} \left\{\sum_{k<m} (\alpha+3)_k  \frac{\scal{z,w}^k}{k!}
	+ \frac{\scal{z,w}^{m}}{(m!)^2} \sum_{k' = 0}^\infty \dfrac{(\alpha+3)_{k'} ((1)_{k'})^2}{((m+1)_{k'})^2 }  \frac{\scal{z,w}^{k'}}{k'!}\right\}
\\ &= \frac{\Gamma(\alpha+3)}{\pi^2\Gamma(\alpha+1)} \left\{\sum_{k<m} (\alpha+3)_k  \frac{\scal{z,w}^k}{k!}
	+ \frac{\scal{z,w}^{m}}{(m!)^2}  {_3F_2}\left(\begin{array}{c} 1, 1, \alpha+3\\ m+1,m+1 \end{array}\bigg |  \scal{z,w} \right) \right\}.
\end{align*}
This completes the proof.
\end{proof}

We conclude this section by noting that the proof of Theorem \ref{Mthm1} is contained in the above propositions.

\section{The generalized Bargmann-Fock spaces on $\C^2$}

In this section we prove Theorem \ref{Mthm2}. Namely, we show that the generalized Bargmann-Fock spaces are reproducing kernel
Hilbert spaces, and we explicit their reproducing kernel function. To this end, we proceed in a similar way as in the previous section. Thus, the proof of Theorem \ref{Mthm2} is contained in the following propositions, that we claim without proofs.

\begin{proposition} The monomials $\varphi_p(z):= z^p$ belongs to $\BDm$ and their square norms are given by
$$ ||\varphi_p||_{\nu,m} = 
\left(\frac{\pi}{\nu}\right)^2 \left(\frac{p!}{\nu^{|p|}}\right)
\left\{ \begin{array}{ll} 
1   & \quad  \mbox{ for } |p|<m \\
\frac{|p|(|p|-1)\cdots(|p|-m+1)}{\nu^{m}} & \quad \mbox{for
		} |p|\geq m 
 \end{array}
\right. .$$
\end{proposition}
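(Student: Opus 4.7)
The plan is to follow the scheme of Proposition \ref{prop1} \emph{mutatis mutandis}, substituting the Gaussian weight $e^{-\nu|z|^{2}}$ for the Bergman weight $(1-|z|^{2})^{\alpha}$. I would first compute the basic Fock norm of a monomial by passing to polar coordinates on each factor of $\C^{2}$: the radial integrals $\int_0^{\infty} r^{2p_j+1}e^{-\nu r^{2}}\,dr = p_j!/(2\nu^{p_j+1})$ are standard Gamma integrals, and combining them with the angular contribution $(2\pi)^{2}$ yields the clean identity $\norm{\varphi_p}_{\nu}^{2} = \pi^{2}p!/\nu^{|p|+2}$.

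Next I would split according to the decomposition $f = f_{1,m} + f_{2,m}$ governing the norm \eqref{norm2}. For $|p|<m$ the monomial lies entirely in the low-degree part, so $(\varphi_p)_{2,m}=0$ and every $|l|=m$ derivative of $\varphi_p$ vanishes; the norm $\norm{\varphi_p}_{\nu,m}^{2}$ then reduces immediately to $\norm{\varphi_p}_{\nu}^{2}$ computed above. For $|p|\geq m$ the low-degree part vanishes, so only the derivative sum contributes, and using $D^{l}\varphi_p = (p!/(p-l)!)\varphi_{p-l}$ (with the convention that this is zero as soon as some $l_j>p_j$), each summand becomes $(p!/(p-l)!)^{2}\cdot\pi^{2}(p-l)!/\nu^{|p|-m+2}$.

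The crux of the calculation is the combinatorial reduction
$$\sum_{|l|=m,\; l\leq p} \frac{m!}{l!\,(p-l)!} \;=\; \frac{|p|(|p|-1)\cdots(|p|-m+1)}{p!},$$
which is a direct consequence of the multi-monomial identity \eqref{s-nomial}, or equivalently the multi-index Vandermonde formula $\sum_{|l|=m,\,l\leq p}\binom{p}{l}=\binom{|p|}{m}$. Substituting back and factoring out $(\pi/\nu)^{2}(p!/\nu^{|p|})$ produces the claimed closed form, and the finiteness of the result simultaneously certifies the membership $\varphi_p\in\BDm$.

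Orthogonality of the family $\{\varphi_p\}_{p}$ with respect to $\scal{\cdot,\cdot}_{\nu,m}$, although not part of the statement itself, falls out for free from the same computation via the orthogonality of monomials in the classical Fock space combined with $D^{l}\varphi_p = (p!/(p-l)!)\varphi_{p-l}$; this remark will be needed in the subsequent propositions to exhibit $\{\varphi_p\}_p$ as an orthogonal basis and derive the reproducing kernel $K_{\BDm}$.
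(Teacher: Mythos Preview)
Your argument is correct and is precisely the natural adaptation of Proposition~\ref{prop1} to the Gaussian setting; the paper itself presents these propositions explicitly ``without proofs,'' so there is no in-text argument to compare against. Your choice of product polar coordinates on $\C\times\C$ (rather than the global polar coordinates $z=r\xi$, $\xi\in S^{3}$, used in the Bergman case) is legitimate and arguably cleaner here, since the Gaussian weight factorizes over the coordinates.

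One caution: carry out the final ``factoring out'' step explicitly rather than asserting it matches the printed formula. From your own intermediate expression
\[
\sum_{|l|=m}\frac{m!}{l!}\Bigl(\frac{p!}{(p-l)!}\Bigr)^{2}\frac{\pi^{2}(p-l)!}{\nu^{|p|-m+2}}
\]
together with the Vandermonde identity you quote, one obtains
\[
\norm{\varphi_p}_{\nu,m}^{2}
=\frac{\pi^{2}\,p!}{\nu^{|p|-m+2}}\,|p|(|p|-1)\cdots(|p|-m+1)
=\Bigl(\frac{\pi}{\nu}\Bigr)^{2}\frac{p!}{\nu^{|p|}}\cdot\nu^{m}\,|p|(|p|-1)\cdots(|p|-m+1),
\]
that is, with $\nu^{m}$ in the \emph{numerator}, not the denominator as printed. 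This is a typographical slip in the stated proposition (as is the missing square on the left-hand side); the corrected exponent is the one consistent with the reproducing kernel in Theorem~\ref{Mthm2}, since $\norm{\varphi_p}_{\nu,m}^{-2}$ must carry the power $\nu^{|p|-m+2}$ to produce the ${_2F_2}$ with argument $\nu\scal{z,w}$.
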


%\proof We have
%\begin{align*}\int_{\C^2}|z^p|^2e^{-\nu|z|^2}
%d\lambda(z)=\prod_{i=1}^2\int_{\C}|\omega|^{2p_i}e^{-\nu|\omega|^2}
%d\lambda(z)\end{align*} we apply integration in polar coordinates
%$\omega=re^{i\theta}$ for $r\in[0,+\infty[$ and $\theta\in[0,2\pi[$,
%we get
%$$\int_{\C}|\omega|^{2p_i}e^{-\nu|\omega|^2}
%d\lambda(z)=\frac{\pi}{\nu}\frac{p_i!}{\nu^{p_i}}$$, it follows
%$$\int_{\C^2}|z^p|^2e^{-\nu|z|^2}
%d\lambda(z)=\frac{\pi^2}{\nu^2}\frac{p!}{\nu^{|p|}}$$ for $|p|\geq
%m$, we have
%\begin{align*}||\varphi_p||_{\nu,m}^2&=\sum_{|l|=m}\frac{m!}{l!}\left(p(p-1)\cdots(p-l+1)\right)^2||\varphi_{p-l}||_{\nu,0}\\&=
%\frac{\pi^2}{\nu^2}\frac{p!}{\nu^{|p|-m}}\sum_{|l|=m}\frac{m!}{p!}\left(p(p-1)\cdots(p-l+1)\right),\end{align*}
%by (\ref{s-nomial}) we find
%$$||\varphi_p||_{\nu,m}^2
%=\frac{\pi^2}{\nu^2}\frac{p!|p|(|p|-1)\cdots(|p|-m+1)}{\nu^{|p|-m}}$$

\begin{proposition}
	The space $\BDm$ is a Hilbert space and the monomials $z^p$;
	$p\in\Z^+\times\Z^+$ constitute an orthogonal basis of
	$\BDm$. 
	\end{proposition}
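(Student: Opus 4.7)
The plan is to mirror, step by step, the arguments of Section 3 for $\ADm$. Three ingredients are needed: (i) a norm-sum expansion analogous to Proposition \ref{pro2}, (ii) a pointwise evaluation bound analogous to Lemma \ref{lem:ev}, and (iii) verification that the linear span of the $\varphi_p$ is dense in $\BDm$.

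For (i), write $f(z) = \sum_p a_p z^p$ and split $f = f_{1,m} + f_{2,m}$ as in (\ref{split}). Using $D^l \varphi_p = \frac{p!}{(p-l)!}\varphi_{p-l}$ together with orthogonality of the monomials in the standard Segal--Bargmann inner product, expand $\|D^l f_{2,m}\|_\nu^2$ termwise, justifying the interchange of sum and integral by the monotone-convergence argument on the exhaustion $\C^2 = \bigcup_{\rho>0} \B^2(\rho)$ (exactly as in the proof of Proposition \ref{pro2}). Summing over $|l|=m$ with weights $m!/l!$ and invoking the multi-monomial identity (\ref{s-nomial}) then collapses the sum to
\begin{equation*}
\|f\|_{\nu,m}^2 = \sum_{p \in (\Z^+)^2} |a_p|^2 \, \|\varphi_p\|_{\nu,m}^2,
\end{equation*}
with $\|\varphi_p\|_{\nu,m}^2$ as in the preceding proposition.

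For (ii), Cauchy--Schwarz applied to $f(z)=\sum_p a_p z^p$ combined with (i) gives
\begin{equation*}
|f(z)|^2 \leq \|f\|_{\nu,m}^2 \cdot \sum_{p \in (\Z^+)^2} \frac{|z^p|^2}{\|\varphi_p\|_{\nu,m}^2}.
\end{equation*}
Using $|p|(|p|-1)\cdots(|p|-m+1) \geq 1$ for $|p|\geq m$ and the identity $\sum_{|p|=k} |p|!\,|z^p|^2/p! = |z|^{2k}$, the right-hand side is dominated by a polynomial in $|z|$ plus a constant multiple of $\sum_k (\nu|z|^2)^k/k! = e^{\nu|z|^2}$. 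This majorant is continuous on all of $\C^2$, so the evaluation functional $f \mapsto f(z)$ is bounded on $\BDm$ by a constant that is locally uniform in $z$. Completeness of $\BDm$ then follows exactly as in Section 3: any Cauchy sequence is uniformly Cauchy on each compact subset of $\C^2$, so it converges by Weierstrass to a holomorphic $f$; matching the $L^{2,\nu}$-limits of the derivatives $D^l f_j$ with $D^l f$ via subsequence extraction shows $f \in \BDm$ and that the convergence holds in norm.

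For (iii), orthogonality of the $\varphi_p$ is immediate from $D^l \varphi_p = \frac{p!}{(p-l)!}\varphi_{p-l}$ and orthogonality of the monomials in $\mathcal{F}^{2,\nu}(\C^2)$. Density follows by approximating $f = \sum_p a_p \varphi_p \in \BDm$ by its Taylor partial sums $f_k := \sum_{|p|<k} a_p \varphi_p$: identity (i) yields $\|f - f_k\|_{\nu,m}^2 = \sum_{|p|\geq k} |a_p|^2 \|\varphi_p\|_{\nu,m}^2 \to 0$ as $k \to \infty$, being the tail of a convergent series. The principal obstacle is step (i): unlike the Bergman case, the ambient measure is not finite, so the monotone-convergence argument must be set up carefully on the exhaustion by Euclidean balls, and one must also verify that the $e^{-\nu|z|^2}$-weighted integrals of all polynomial derivatives involved are finite term-by-term. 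Once this is secured, the rest of the proof is formally identical to the one carried out in Section 3.
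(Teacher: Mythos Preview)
Your proposal is correct and follows exactly the route the paper intends: the paper explicitly states that the propositions of Section 4 are claimed without proof because one ``proceed[s] in a similar way as in the previous section,'' and your three-step outline (Parseval expansion via monotone convergence on an exhaustion, evaluation bound via Cauchy--Schwarz now with the majorant $e^{\nu|z|^2}$, and density via tail-of-series) is precisely the Section 3 argument transported to $\C^2$. Your remark that the infinite-measure setting requires a slightly more careful justification of the term-by-term expansion is well taken but does not alter the structure of the proof.
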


\begin{proposition}
	 The space $\BDm$ is a reproducing kernel Hilbert space. Its reproducing kernel function is given by $$K_{\BDm}(z,w)=\frac{\nu^2}{\pi^2}\left(\sum_{l<m}\nu^l\frac{\scal{z,w}}{l!}+\frac{\scal{z,w}^m}{(m!)^2}
	{_2F_2} \left(\begin{tabular}{c|c}
	1\;,\;1 &  \\
	&$\nu \scal{z,w}$ \\
	m+1\;,\;m+1& \\
	\end{tabular}\right)\right)$$
	\end{proposition}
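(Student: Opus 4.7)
The plan is to mirror the four-step approach used for $\ADm$ in Section~3.

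First, I would establish that each point evaluation $f\mapsto f(z)$ is continuous on $\BDm$ and uniformly bounded on compact subsets of $\C^{2}$. Writing $f(z)=\sum_{p}a_{p}z^{p}$ and applying Cauchy--Schwarz against the monomial norms of the previous proposition yields
$$
|f(z)|^{2}\;\le\;\|f\|_{\nu,m}^{2}\sum_{p\in(\Z^{+})^{2}}\frac{|z^{p}|^{2}}{\|\varphi_{p}\|_{\nu,m}^{2}},
$$
and the right-hand sum is tamed by splitting at $|p|=m$, using the trivial bound $|p|(|p|-1)\cdots(|p|-m+1)\ge 1$ on the tail, and recognizing the majorant as a constant multiple of the classical Bargmann kernel $e^{\nu|z|^{2}}$. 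Local boundedness together with Riesz's representation theorem then give the reproducing kernel Hilbert space property.

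Second, since $\{\varphi_{p}\}_{p\in(\Z^{+})^{2}}$ is an orthogonal basis of $\BDm$, the kernel admits the Bessel expansion
$$
K_{\BDm}(z,w)=\sum_{p\in(\Z^{+})^{2}}\frac{\varphi_{p}(z)\overline{\varphi_{p}(w)}}{\|\varphi_{p}\|_{\nu,m}^{2}},
$$
into which the explicit norms from the first proposition of this section are substituted. Regrouping as $\sum_{|p|<m}=\sum_{k<m}\sum_{|p|=k}$ and $\sum_{|p|\ge m}=\sum_{k\ge m}\sum_{|p|=k}$, and applying the multi-monomial identity $\sum_{|p|=k}z^{p}\bar w^{p}/p!=\scal{z,w}^{k}/k!$, the subsum over $|p|<m$ collapses immediately to $(\nu/\pi)^{2}\sum_{k<m}(\nu\scal{z,w})^{k}/k!$.

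Third, for the tail $|p|\ge m$, the substitution $k=k'+m$ together with the identities $(k'+m)!=m!\,(m+1)_{k'}$ and $k(k-1)\cdots(k-m+1)\cdot k!=((k'+m)!)^{2}/k'!$ recasts the remaining series as
$$
\frac{\nu^{2}}{\pi^{2}}\,\frac{\scal{z,w}^{m}}{(m!)^{2}}\sum_{k'\ge 0}\frac{((1)_{k'})^{2}}{((m+1)_{k'})^{2}}\,\frac{(\nu\scal{z,w})^{k'}}{k'!},
$$
which is by definition the ${_2F_2}(1,1;m+1,m+1;\nu\scal{z,w})$-term of the claimed closed form. Adding the two pieces yields the announced kernel.

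The main obstacle is the Pochhammer bookkeeping in the last step. However, because the Bargmann setting lacks the $\alpha+3$ parameter encountered in Section~3, the resulting series reduces to a $_{2}F_{2}$ rather than a $_{3}F_{2}$, so this computation is strictly simpler than its Bergman counterpart.
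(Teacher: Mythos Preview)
Your proposal is correct and follows precisely the route the paper intends: the authors explicitly state that the propositions of Section~4 are claimed without proof because ``we proceed in a similar way as in the previous section,'' and your four steps (Cauchy--Schwarz evaluation estimate against $e^{\nu|z|^2}$, Bessel expansion over the orthogonal basis $\{\varphi_p\}$, collapse of the inner sums via the multi-monomial identity, and Pochhammer rewriting of the tail as a ${_2F_2}$) are exactly the Bargmann-side transcription of the Bergman computations in Propositions~\ref{prop1}--\ref{pro2}, Lemma~\ref{lem:ev}, and the final kernel proposition of Section~3. Your observation that the absence of the $\alpha+3$ parameter is what reduces the ${_3F_2}$ to a ${_2F_2}$ is also on the mark.
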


\section{Asymptotic: Proof of Theorem \ref{Mthm3}} 
The proof of Theorem \ref{Mthm3} lies essentially on following lemma giving the asymptotic of the  ${_3F_2}$-hypergeometric function. Namely,  we claim

\begin{lemma}\label{lemmc} Let $z\in\mathbb{C}$ and $a$, $b$, $c$ are complex
	number. Then we have $$\lim_{x\longrightarrow+\infty} {_3F_2}
	\left(\begin{tabular}{c|c}
	$b$\; , \;$c$\;,\; $x+a$&  \\
	& $\frac{z}{x}$ \\
	$d$ \;,\; $e$& \\
	\end{tabular}\right)={_2F_2} \left(\begin{tabular}{c|c}
	\;$b$\; , \;$c$\;&  \\
	& $z$ \\
	$d$ \;,\; $e$& \\
	\end{tabular}\right).$$
	\end{lemma}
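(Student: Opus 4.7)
The plan is to view both sides as power series in the argument and pass to the limit term by term, justifying the interchange of limit and sum by Tannery's theorem (dominated convergence for series). For $x$ large enough that $|z/x|<1$, the ${_3F_2}$ on the left is given by the absolutely convergent series
$${_3F_2}\left(\begin{array}{c} b,c,x+a \\ d,e\end{array}\bigg|\frac{z}{x}\right)=\sum_{k\geq 0}\frac{(b)_k(c)_k(x+a)_k}{(d)_k(e)_k\,k!}\left(\frac{z}{x}\right)^k.$$
For each fixed $k$, the identity
$$\frac{(x+a)_k}{x^k}=\prod_{j=0}^{k-1}\left(1+\frac{a+j}{x}\right)\longrightarrow 1 \quad (x\to +\infty)$$
shows that the $k$th term tends to $(b)_k(c)_k z^k/((d)_k(e)_k\,k!)$, i.e.\ exactly the $k$th term of the ${_2F_2}$-series on the right.

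The key step is to construct a summable majorant that is independent of $x$ for $x$ large. Fix any $x_0>|z|$; for all $x\geq x_0$ the triangle inequality gives
$$\left|\frac{(x+a)_k}{x^k}\right|\leq\prod_{j=0}^{k-1}\left(1+\frac{|a|+j}{x_0}\right)=\frac{(x_0+|a|)_k}{x_0^k},$$
so that the $k$th term of our ${_3F_2}$ is dominated in modulus by
$$M_k:=\left|\frac{(b)_k(c)_k}{(d)_k(e)_k\,k!}\right|(x_0+|a|)_k\left(\frac{|z|}{x_0}\right)^k.$$
Since $|z|/x_0<1$ and ${_3F_2}$ has radius of convergence $1$, one has $\sum_k M_k<\infty$. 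Tannery's theorem then lets us interchange limit and sum, delivering the stated identity.

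The main obstacle I anticipate is precisely this uniform majoration. Naive estimates such as $\prod_{j=0}^{k-1}(1+(|a|+j)/x)\leq e^{k(k-1)/(2x)+k|a|/x}$, or the Stirling asymptotics for $(x+a)_k/x^k$, produce bounds growing like $e^{k^2/x}$ which are not summable against an arbitrary $|z|^k$ (the argument $z$ is free). The cleaner idea is to first trade the denominator $x^k$ for $x_0^k$ with $x_0>|z|$ fixed; the resulting product telescopes into a Pochhammer symbol $(x_0+|a|)_k$, reducing matters to the convergence of a genuine ${_3F_2}$ series strictly inside its unit disk of convergence, which is automatic. This is the only delicate point; everything else is formal series manipulation, and the proof of Theorem \ref{Mthm3} will then follow by plugging this lemma into the explicit formula \eqref{RK-BDCd} for $K_{\ADmnr}$ with $\alpha=\nu R^2$ and letting $R\to+\infty$.
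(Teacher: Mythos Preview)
The paper does not actually prove this lemma; it is introduced with the phrase ``we claim'' and then used in the proof of Theorem~\ref{Mthm3} without any justification. Your Tannery/dominated-convergence argument is correct and is exactly the natural way to supply the missing proof: termwise convergence $(x+a)_k/x^k\to 1$ is clear, and your key observation --- freezing the denominator at some $x_0>|z|$ so that the product telescopes into the Pochhammer symbol $(x_0+|a|)_k$ --- produces a uniform majorant whose convergence reduces to that of a hypergeometric-type series strictly inside the unit disk.

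One cosmetic point: the series $\sum_k M_k$ is not literally a ${_3F_2}$-series, since its coefficients involve the moduli $|(b)_k(c)_k/((d)_k(e)_k)|$ rather than genuine Pochhammer symbols, so the phrase ``${_3F_2}$ has radius of convergence $1$'' is a slight abuse. The clean way to finish is to apply the ratio test directly:
\[
\frac{M_{k+1}}{M_k}=\left|\frac{(b+k)(c+k)}{(d+k)(e+k)}\right|\cdot\frac{x_0+|a|+k}{k+1}\cdot\frac{|z|}{x_0}\longrightarrow\frac{|z|}{x_0}<1,
\]
which gives $\sum_k M_k<\infty$ and completes the argument. (Implicitly one needs $d,e\notin\{0,-1,-2,\dots\}$ so that the denominators never vanish, but this is already required for either hypergeometric series to make sense.)
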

	
The results in the third section on the unit ball can be generalized easily to the ball $\B^2_R$ centered at $0$ and of radius $R$.
Thus, the generalized Bergman-Dirichlet space is given by
$$
\ADmr=
\left\{f\in\mathcal{H}ol(\B^2_R)\;\;/\;\;||f||_{\alpha,m,R}^2=||f_{1,m}||_{\alpha,R}^2+\sum_{|l|=m}\frac{m!}{l!}||D^l
f||_{\alpha,R}^2<\infty\right\},$$ 
supplied with the norm
$$
||f||_{\alpha,R}^2=\int_{\B_R^2}|f(z)|^2\muar(z),
$$
 where
$$
\muar(z)=\left(1-\left|\frac{z}{R}\right|^2\right)^\alpha
d\lambda(z)
$$ is the density measure. We make the change of
variable $w=\frac{z}{R}$, $z\in\B^2_R$, we get
$$
\int_{\B^2_R}|z^p|\muar(z)=R^{2(|p|+2)}\int_{\B^2}|w^p|\mes(w)
=\pi^2R^{4}\frac{R^{2|p|}p!\Gamma(\alpha+1)}{\Gamma(|p|+\alpha+3)}.
$$
It follows that the norm of $\varphi_p$ in
$\ADmr$ is given by
$$
||\varphi_p||_{\alpha,R,m}=
\pi^2R^{4}\Gamma(\alpha+1)\frac{R^{2|p|}p!}{\Gamma(|p|+\alpha+3)}
$$
for $|p|<m$ and
$$
||\varphi_m||_{\alpha,R,m}^2
=\pi^2R^{4}\Gamma(\alpha+1)\frac{R^{2(|p|-m)}|p|(|p|-1)
\cdots(|p|-m+1)p!}{\Gamma(|p|-m+\alpha+3)}
$$
for $|p|\geq m$.
Moreover, the reproducing kernel function of $\ADmr$ reads
$$
K_{\ADmr}(z,w)=\frac{\Gamma(\alpha+3)}{\pi^2R^{4}\Gamma(\alpha+1)}
\sum_{k<m}\frac{(\alpha+3)_k}{R^{2k}}\frac{\scal{z,w}^k}{k!}
+\frac{\scal{z,w}^m}{(m!)^2}
{_3F_2} \left(\begin{tabular}{c|c}
$\alpha+3$\;,\;1\;,\;1 &  \\
&$ \frac{\scal{z,w}}{R^2}$ \\
m+1\;,\;m+1& \\
\end{tabular}\right).$$
Now, for the specific choose of $\alpha=\nu R^2$, we see that the density
$(1-|\frac{z}{R}|^2)^{\alpha}$ leads to the Gaussian density
$e^{-\nu |z|^2}$. Furthermore, using the Binet formula \cite[page 47]{erdedyi},
$$\frac{\Gamma(x+a)}{\Gamma(x-b)}=x^{a-b}(1+O(\frac{1}{x}))$$ 
for $x=\nu R^2$, $a=n+1$ and $b=1$, we get
$$\lim_{R\rightarrow+\infty}\frac{1}{\pi^2R^{4}}\frac{\Gamma(\nu R^2+3)}{\Gamma(\nu R^2+1)}=\left(\frac{\nu}{\pi}\right)^2.$$ 
Finally, by means of Lemma \ref{lemmc}, we deduce that $K_{\ADmr}(z,w)$ converges pointwise to $K_{\ADm}(z,w)$ as
$R\longrightarrow +\infty$.

%%%%%%%%%%%%%%%%%%%%%%%%%%%%%%%%%%%%%%%%%%%%%%%%%%%%%%%%%%%%%%%%%%%%%%%%%%%%
\quad

\noindent{\bf Acknowledgement:}
The assistance of the members of the seminars "Partial differential equations and spectral geometry" is gratefully acknowledged.
A. Ghanmi and A. Intissar are partially supported by the Hassan II Academy of Sciences and Technology. M. Ziyat is partially supported by the    CNRS grant 56UM5R2015, Morocco.

\section*{References}

\end{document}